\ifodd\value{page}\relax
\newtheorem{lemma}{Lemma}[section]
\newtheorem{remark}[lemma]{Remark}
\newtheorem{theorem}[lemma]{Theorem}
\newtheorem{theorem*}{Theorem}
\newtheorem{corollary}[lemma]{Corollary}
\newtheorem{conjecture}[lemma]{Conjecture}
\newenvironment{manualtheorem}[1]{%
  \manualtheoreminner
}{\endmanualtheoreminner}
\DeclareMathOperator{\Ext}{\mathscr{E}\text{\kern -3pt {\calligra\large xt}}\,}
\DeclareMathOperator{\Hom}{\mathscr{H}\text{\kern -3pt {\calligra\large om}}\,}
\title[Toric image and amplified endomorphism]{Images of toric varieties and amplified endomorphism of weak Fano threefolds}
\author{Supravat Sarkar}
\date{}
\begin{document}

\begin{abstract}
    We show that some important classes of weak Fano $3$-folds of Picard rank $2$ do not have Bott vanishing. Using this we show that any smooth projective $3$-fold $X$ of Picard rank $2$ with $-K_X$ nef which is the image of a projective toric variety is toric. This proves a special case of a conjecture by Occhetta-Wi{\'s}niewski, extending a corresponding previous work for Fano $3$-folds. We also show that a weak Fano $3$-fold of Picard rank $2$ having an int-amplified endomorphism is toric. This proves a special case of a conjecture by Fakhruddin, Meng, Zhang and Zhong, extending corresponding previous work for Fano $3$-folds.
\end{abstract}
\maketitle
\begin{center} 
\textbf{Keywords}: Toric image, int-amplified endomorphism, Bott vanishing 
\end{center}
\begin{center}
\textbf{MSC Number: 14M25, 14E20} 
\end{center}
\begin{center}
    \textbf{Author information}: Fine hall, Princeton University, ss6663@princeton.edu
\end{center}
\maketitle
\section{Acknowledgement}
I thank Burt Totaro, Jakub Witaszek and János Kollár for insightful discussions and valuable ideas.
\section{Introduction}
Unless otherwise stated, we work over the field of complex numbers. This article is devoted to proving special cases of two very related conjectures. These are the following:
\begin{conjecture}\label{1}
    If a smooth projective variety $X$ admits a surjective map $\phi:Z\to X$ from a proper toric variety $Z$, then $X$ is a toric variety.
\end{conjecture}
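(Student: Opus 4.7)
This statement is an open conjecture in general; my proposal targets the special case actually proved in the paper, where $X$ is a smooth projective $3$-fold of Picard rank $2$ with $-K_X$ nef. The overall plan is to use Bott vanishing as an obstruction to being a toric image. Smooth projective toric varieties satisfy Bott vanishing (Bott--Danilov--Steenbrink), and after replacing the (possibly singular) toric $Z$ by a smooth toric resolution, one should establish that this vanishing descends to the smooth image $X$, for instance via a splitting of $\mathcal{O}_X \hookrightarrow Rf_*\mathcal{O}_Y$ together with the Leray spectral sequence applied to $\Omega_X^j\otimes L$. Granting such a descent, any $X$ admitting a surjection from a toric variety must itself satisfy Bott vanishing.

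The second ingredient is the Mori--Mukai classification of smooth Fano $3$-folds and its extension to the weak Fano setting (Jahnke--Peternell--Radloff and later work). In Picard rank $2$ the toric members of this list are few and explicit, so it suffices to show that every other member fails Bott vanishing. Concretely, for each non-toric weak Fano $3$-fold $X$ of Picard rank $2$, one would exhibit an ample line bundle $L$ and indices $i>0$, $j\ge 0$ with $H^i(X,\Omega_X^j\otimes L)\ne 0$. Together with the descent step, this rules out any such $X$ as a toric image and forces the conclusion.

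The main obstacle, and where most of the real work lives, is the case-by-case Bott vanishing check. These non-toric $X$ come as blow-ups of $\mathbb{P}^3$ or quadrics, conic bundles, or del Pezzo fibrations over lower-dimensional bases, so the natural route is to run the relative cotangent sequence of the structure morphism $X\to B$, compute the pushforwards of $\Omega_X^j$, and pick $L$ adapted to the fibration so that either a term in the Leray spectral sequence or a boundary in the resulting long exact sequence is nonzero. Choosing $L$ so that the base contribution dominates while a fiberwise connecting map refuses to vanish is the delicate point, and one expects the analysis to split cleanly into the conic-bundle family and the del Pezzo-fibration family.

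A secondary subtlety is the Bott-vanishing descent itself from a toric resolution of $Z$ to $X$: it is not formal, and establishing it in full may require either the Decomposition Theorem for the resolved map or explicit control over $f_*\Omega^j$ on smooth toric varieties, both of which are technical but available in the current literature.
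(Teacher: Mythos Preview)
Your proposal correctly identifies Bott vanishing as a central obstruction, and the descent step you worry about is in fact already settled: toric images are $F$-liftable (Achinger--Witaszek--Zdanowicz), and $F$-liftable smooth projective varieties satisfy Bott vanishing. However, there is a genuine gap in your main step. The paper does \emph{not} show that every non-toric weak Fano $3$-fold of Picard rank $2$ fails Bott vanishing; it only establishes this for certain explicit families (degree $6$ and $8$ del Pezzo fibrations, most conic bundles, and a handful of birational cases from the Cutrone--Marshburn and Jahnke--Peternell--Radloff tables). For the remaining cases the paper exploits structural properties of toric images that go well beyond Bott vanishing: the general fiber of a fibration on a toric image is itself a toric image, hence toric, which rules out del Pezzo fibers of degree $\le 5$; contracting an extremal ray lands on a Picard-rank-$1$ toric image, forcing the target to be $\mathbb{P}^3$ by Occhetta--Wi\'sniewski; toric-image pairs $(X,D)$ are log canonical and therefore normal crossing in codimension $2$; and any prime divisor $D\subset\mathbb{P}^3$ with $(\mathbb{P}^3,D)$ a toric image must be a hyperplane. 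These are used to eliminate, for instance, blow-ups of $\mathbb{P}^3$ along curves and one exceptional conic-bundle family where no Bott calculation is made.

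Your outline also omits a reduction that precedes any cohomological computation: when the $K_X$-trivial contraction is small, the paper passes to the flop $X^+$ and proves that the toric-image property is preserved under flops (via a toric fiber-product construction over the common base). This is what reduces the problem to the case where $X$ or $X^+$ carries a fibration. Without it you are stuck in the purely birational regime (both extremal contractions divisorial or small), which the paper handles not by Riemann--Roch but by the lc/normal-crossing and hyperplane arguments above. In short, Bott vanishing is one of two pillars; the other is the inheritance of toric-image structure under contractions, fibers, and flops, and your plan does not account for it.
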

\begin{conjecture}\label{2}
    A smooth projective rationally connected variety admitting an int-amplified endomorphism is toric.
\end{conjecture}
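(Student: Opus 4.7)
The plan is to attack the conjecture in the specific setting of smooth weak Fano threefolds of Picard rank $2$, where classification and cohomological techniques give sufficient control; the full conjecture seems out of reach of direct methods.

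The first step is to invoke the cohomological obstruction known for int-amplified endomorphisms: under suitable hypotheses, a smooth projective variety admitting an int-amplified endomorphism satisfies Bott vanishing, i.e.\ $H^j(X, \Omega_X^i \otimes L) = 0$ for all $i \geq 0$, $j > 0$, and $L$ ample (one pulls back by iterates of $f$ to force vanishing in increasingly positive regime, then uses positivity of $f^*L - L$ to extend). Granting this, the problem reduces to a purely cohomological statement: every non-toric smooth weak Fano 3-fold of Picard rank $2$ must violate Bott vanishing.

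Second, I would invoke the classification of smooth weak Fano 3-folds of Picard rank $2$ (after Jahnke--Peternell--Radloff and related authors), organized by the two extremal contractions associated with the rays of $\overline{NE}(X)$: blow-ups of Fano 3-folds along smooth curves, conic bundles over surfaces, del Pezzo fibrations over $\mathbb{P}^1$, and flipping/small contractions. For each non-toric case on the list one must exhibit a single triple $(i,j,L)$ witnessing the failure of Bott vanishing. The standard tools here are the relative cotangent sequence of an extremal contraction, the Leray spectral sequence for $\Omega_X^i\otimes L$, and explicit cohomology computations on the base.

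A parallel route runs through Conjecture 1: varieties admitting an int-amplified endomorphism can, under mild hypotheses, be realized as images of projective toric varieties, so the paper's toric-image result for weak Fano 3-folds of Picard rank $2$ (mentioned in the abstract) yields Conjecture 2 in the same range essentially for free. The cohomological heart of either route is identical.

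The main obstacle is the case-by-case Bott vanishing analysis. Several borderline families -- divisorial contractions whose exceptional divisor is a ruled surface over a curve of positive genus, or del Pezzo fibrations with delicate singular fibre behaviour -- require carefully chosen twisting bundles $L$ and subtle spectral sequence computations to produce the non-zero cohomology class. Ruling out these borderline cases, rather than the overall strategy, is where the bulk of the technical work lies.
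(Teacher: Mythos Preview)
Your overall framework is right in spirit---Bott vanishing is indeed the main cohomological weapon, and the paper uses it via Corollary~\ref{bott}---but the reduction you propose is stronger than what the paper proves and is where the argument would stall. You claim the problem reduces to showing that \emph{every} non-toric smooth weak Fano $3$-fold of Picard rank $2$ violates Bott vanishing. The paper does not establish this, and its own Bott-vanishing corollary (Corollary~\ref{bott}) explicitly excludes at least one family (No.~1 of Table~7.7 in Jahnke--Peternell--Radloff); that family is instead eliminated by a separate geometric argument in Step~3. So the purely cohomological route, as you describe it, has a hole unless you can extend the Bott-vanishing failure to \emph{all} non-toric cases on the list, which the paper does not attempt.

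What the paper actually does is a hybrid: it uses the int-amplified hypothesis \emph{directly}, not only through Bott vanishing. Specifically, an int-amplified endomorphism (after passing to a power) descends along extremal contractions and through flops (Lemmas~\ref{ex1}, \ref{pol flop}), and forces the pair $(X,E+D)$ built from exceptional divisors to be log canonical (Lemma~\ref{lc1}). These structural facts let the paper pin down the geometry sharply---e.g.\ that the target of the $K_X$-negative contraction is $\mathbb{P}^3$, that certain divisors are hyperplanes, that blown-up curves have degree $\le 2$, and that del Pezzo fibres have degree $6$, $8$, or $9$ (Lemma~\ref{del Pezzo fibre1}). Only after this narrowing are the remaining cases killed by the Bott-vanishing computations. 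Your ``parallel route'' through toric images is also not available: there is no known implication from int-amplified endomorphism to toric image, and the paper proves Theorems~\ref{A} and~\ref{B} by parallel but separate arguments rather than deducing one from the other.
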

 Conjecture \ref{1} is attributed to Occhetta and Wi{\'s}niewski, who proved it in the special case of $\rho(X)=1$ in \cite{occhetta2002euler}, where $\rho(X)$ denotes the Picard rank of $X$. One motivation of studying this question is the famous result of Lazarsfield that says the only smooth projective variety admitting a nonconstant surjective map from $\mathbb{P}^n$ is $\mathbb{P}^n$ itself. Conjecture \ref{1} is also known when $X$ is a surface or Fano $3$-fold by {\cite[Proof of Theorem 4.4.1]{achinger2021global}}, {\cite[Theorem 6.9, 7.7]{achinger2023global}} and {\cite[Theorem 7.2]{totaro2023endomorphisms}}. Similar questions have been studied in \cite{hwang2001projective} and \cite{demailly2008compact}.

 In this paper, we prove it for threefolds of Picard rank $2$ and nef anticanonical bundle.

 \begin{manualtheorem}{A}\label{A}
    Let $X$ be a smooth projective threefold with $\rho(X)=2$, and $-K_X$ is nef. Suppose $X$ is a toric image. Then $X$ is either toric Fano or one of the following: $$\mathbb{P}_{\mathbb{P}^1}(\mathcal{O}_{\mathbb{P}^1}\oplus\mathcal{O}_{\mathbb{P}^1}(1)^2),\mathbb{P}_{\mathbb{P}^1}(\mathcal{O}_{\mathbb{P}^1}^2\oplus\mathcal{O}_{\mathbb{P}^1}(2)) , \mathbb{P}_{\mathbb{P}^2}(\mathcal{O}_{\mathbb{P}^2}\oplus\mathcal{O}_{\mathbb{P}^2}(3)).$$
\end{manualtheorem}

Here \textit{toric image} means the image of a proper toric variety under a surjective map. Incidentally, it also gives a proof without using much toric geometry that the three varieties in the theorem are the only toric weak Fano threefolds\footnote{A smooth projective variety $X$ is weak Fano if $-K_X$ is nef and big.} of Picard rank $2$ which are not Fano. 

Conjecture \ref{2} is due to Fakhruddin, Meng, Zhang and Zhong, see {\cite[Question 4.4]{fakhruddin2002questions}}, \cite{meng2022non}. It was motivated by the study of the strong restrictions on a projective variety imposed by the existence of an int-amplified endomorphism. We will recall the definition of int-amplified endomorphism in \S 2. For smooth non-uniruled projective varieties, the structure is essentially known: these are the \'etale quotients of abelian varieties (see \cite{meng2017building}). For uniruled varieties, using an equivariant minimal model model program one can reduce to rationally connected varieties. So Conjecture \ref{2} is fundamental in the classification of projective varieties with int-amplified endomorphism. This conjecture is trivial for curves, and known for surfaces by \cite{nakayama2008complex}, projective hypersurfaces by \cite{beauville2001endomorphisms}, \cite{paranjape1989self} and Fano threefolds by {\cite[Theorem 1.4]{meng2022non}}, {\cite[Theorem 6.1]{totaro2023endomorphisms}}. 

 In this paper, we prove it for weak Fano threefolds of Picard rank $2$.

\begin{manualtheorem}{B}\label{B}
    Let $X$ be a smooth projective weak Fano threefold with $\rho(X)=2$. Suppose $X$ has an int-amplified endomorphism. Then $X$ is either toric Fano or one of the following: $$\mathbb{P}_{\mathbb{P}^1}(\mathcal{O}_{\mathbb{P}^1}\oplus\mathcal{O}_{\mathbb{P}^1}(1)^2),\mathbb{P}_{\mathbb{P}^1}(\mathcal{O}_{\mathbb{P}^1}^2\oplus\mathcal{O}_{\mathbb{P}^1}(2)) , \mathbb{P}_{\mathbb{P}^2}(\mathcal{O}_{\mathbb{P}^2}\oplus\mathcal{O}_{\mathbb{P}^2}(3)).$$
\end{manualtheorem}

One of the main tools in our proofs is the concept of Bott vanishing developed in \cite{kawakami2023endomorphisms}, \cite{totaro2023endomorphisms} and \cite{totaro2023bott}. In \cite{stark2024cone}, it is shown that $68$ families of Fano $3$-folds do not have Bott vanishing. In Corollary \ref{bott}, we show failure of Bott vanishing for several classes of weak Fano $3$-folds of Picard rank $2$. We use this result in our proofs of main theorems.

Before a few days I posted the first version of this article in arxiv, another preprint \cite{chen2025smooth} appeared in arxiv, proving that any rationally connected threefold of Picard rank $2$ admitting int-amplified endomorphism is toric. Their work is independent of me, they do not use the method of Bott vanishing, and as far as I know, their method does not prove my Theorem \ref{A}. On the other hand, my arguments for Theorem \ref{B} do not extend to prove the Theorem in \cite{chen2025smooth}. A particular feature of the arguments in this paper is that the proofs of Theorems \ref{A} and \ref{B} are very similar.

\section{Preliminaries}

\begin{enumerate}
    \item For a normal projective variety $X$ and distinct prime divisors $D_i$ in $X$, we call $(X,\sum_i D_i)$ \textit{toric image} if there is a proper toric variety $Z$ and a surjective morphism $f:Z\to X$ such that the irreducible components of each $f^{-1}(D_i)$, which are of codimension $1$ in $Z$, are toric divisors. In this case, using Stein factorization and Blanchard's theorem (Lemma \ref{Blanchard}), one sees that there is $(Z, f)$ with the above properties with $f$ a finite map. We say $X$ is a toric image if the pair $(X,0)$ is a toric image.
    \item We say an endomorphism $f$ of a normal projective variety $X$ is \textit{int-amplified} if there is an ample line bundle $H$ on $X$ such that $f^*H\otimes H^{-1}$ is ample. This implies in particular that $f$ is finite surjective.
    \item For a normal projective variety $X$ and distinct prime divisors $D_i$ in $X$, we say $(X,\sum_i D_i)$ has an int-amplified endomorphism if there is an int-amplified endomorphism $f$ of $X$ with $f^{-1}(D_i)=D_i$ set-theoretically for all $i$.
    \item Recall the notion of log Bott vanishing introduced in \cite{totaro2023endomorphisms}. For a normal projective variety $X$ and reduced Weil divisor $D$ on $X$, we say $(X, D)$ has \textit{log Bott vanishing} if
    \[
H^j\!\left(
X,\,
\Omega_X^{[i]}(\log D)(A-E)
\right)
=0
\]
for every reduced divisor \(0\le E\le D_X\), \(i\ge 0\), \(j>0\), and \(A\) an ample Weil divisor on \(X\). Here $\Omega_X^{[i]}(\log D)$ is the reflexivization (double dual) of the sheaf $\Omega_X^{i}(\log D)$ of $i$-forms with logarithmic poles along $D$. We say $X$ has Bott vanishing if $(X, 0)$ has log Bott vanishing.
     \item  Let $A^{\ast}(X)$ denote the Chow ring of a smooth projective variety $X$. We identify Pic$(X)=A^1(X)$, where by Pic(X) we mean the Picard group of $X$. For a vector bundle $E$ over a smooth projective variety $X$, the total Chern class of $E$ is denoted by $c(E)\in A^{\ast}(X)$.  For a vector bundle $E$ on $\mathbb{P}^n$, there are unique integers $c_i$ such that $c_i(E)=c_ih^i$, where $h\in A^1(\mathbb{P}^n)$ is the class of a hyperplane. By abuse of notation, we will write $c_i(E)=c_i.$

\item A morphism $f:X\to Y$ of normal projective varieties is called a \textit{contraction} if $f_*\mathcal{O}_X=\mathcal{O}_Y$. Also, $f$ is called a \textit{small} contraction if $f$ is a birational contraction and the exceptional locus of $f$ have codimension $\geq 2$ in $X$.
\item We will follow the notation and convention of projective bundles as in \cite[Chapter II, Section 7]{hartshorne2013algebraic}.
\end{enumerate}

\section{Failure of Bott vanishing}

The following theorem is crucial in our proof of the main theorems.
\begin{theorem}\label{rr}
The following statements hold.
    \begin{enumerate}
        \item Let $X$ be a smooth projective weak Fano $3$-fold with $\rho(X)=2$. Let $\phi:X\to Y$, $\psi: X\to X'$ be the contractions of the two rays of $\overline{NE}(X)$, $\phi$ being a $K_X$-negative contraction. Let $H$ be the pullback of the ample generator of $\mathrm{Pic}(Y)$ to $X$, considered as an element of $A^1(X)$. Let $h=h^2(X,\Omega^1_X)$, $c_i=c_i(T_X)$ for $1\leq i\leq 3.$ Then we have 
        $$-\chi(X, \Omega^2_X(H-K_X))=16+h-\frac{c_1^3}{2}-\frac{5}{4}(c_1^2H+c_1H^2)+\frac{3}{4}c_2H-\frac{1}{2}H^3.$$
        \item Let $a_0, a_1, a_2, a_3$ be integers, not all distinct. Let $\mathcal{E}=\oplus_{i=0}^3\mathcal{O}_{\mathbb{P}^1}(a_i)$, a vector bundle of rank $4$ over $\mathbb{P}^1$. Let $U=\mathcal{O}_{\mathbb{P}(\mathcal{E})}(1)$, $H$ be the pullback of $\mathcal{O}_{\mathbb{P}^1}(1)$ to $\mathbb{P}(\mathcal{E}).$ Let $k$ be an integer, and $X$ be a smooth irreducible member of the complete linear system $|kH+2U|$ on $\mathbb{P}(\mathcal{E}).$ Then for every integer $a,$ we have
        $$-\chi(X, \Omega^2_X(aH+U))=2\big(\sum_i a_i+2k\big).$$ Here by abuse of notation, $H, U$ denotes the restrictions of $H, U$ to $X.$
        \item Let $\mathcal{E}$ be a vector bundle of rank $2$ on $\mathbb{P}^2$ with $\mathcal{E}(1)$ ample. Let $X=\mathbb{P}(\mathcal{E})$, $U=\mathcal{O}_{\mathbb{P}(\mathcal{E})}(1)$, $H$ be the pullback of $\mathcal{O}_{\mathbb{P}^2}(1)$ to $X.$ For $i=1,2$, let $c_i$ be the integer representing $c_i(\mathcal{E})$. Then
        $$-\chi(X, \Omega^2_X(H+U))=c_2-{c_1\choose{2}},$$ and $$h^0(X,\Omega^2_X(H+U))=h^2(\mathbb{P}^2,\mathcal{E}(-c_1-1)). $$
       
        \end{enumerate}
\end{theorem}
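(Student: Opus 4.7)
For (1), the guiding observation is that on any smooth threefold $\Omega^2_X \cong T_X \otimes K_X$, hence $\Omega^2_X(H-K_X) \cong T_X(H)$, so the computation reduces to $\chi(X, T_X(H))$ by Hirzebruch--Riemann--Roch. Expanding $\int_X \operatorname{ch}(T_X) \cdot e^H \cdot \operatorname{td}(X)$ in degree $3$ produces a polynomial in $H$ and the Chern classes $c_1, c_2, c_3$ of $T_X$ which initially contains the terms $c_3$ and $c_1 c_2$ that do not appear on the right-hand side. I would eliminate them as follows. Kawamata--Viehweg vanishing on the weak Fano $X$ gives $\chi(\mathcal{O}_X) = 1$, and comparing with the Noether formula $\chi(\mathcal{O}_X) = c_1 c_2 / 24$ yields $c_1 c_2 = 24$. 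A separate RR computation gives $\chi(X, \Omega^1_X) = c_1 c_2 / 24 - c_3 / 2$; on the other hand Hodge theory on the rationally connected threefold $X$ (using $h^{p,0} = 0$ for $p > 0$, $h^{1,1} = \rho(X) = 2$, and $h^{1,3} = h^1(K_X) = 0$ by Serre duality) gives $\chi(X, \Omega^1_X) = h - 2$. Combining, $c_3 = 6 - 2h$. Substituting both identities into the raw RR expansion is then a short algebraic verification.

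For (2), write $Y = \mathbb{P}(\mathcal{E})$ and $D = kH + 2U$, so $X \in |D|$ sits as a smooth divisor in the $\mathbb{P}^3$-bundle $Y \to \mathbb{P}^1$. The plan is to reduce everything to $Y$. The conormal sequence for $X \subset Y$ gives
$$0 \to \Omega^1_X(-D) \to \Omega^2_Y|_X \to \Omega^2_X \to 0,$$
and an analogous sequence in degree $1$. Chaining these with the Koszul resolution $0 \to \mathcal{O}_Y(-D) \to \mathcal{O}_Y \to \mathcal{O}_X \to 0$ expresses $\chi(X, \Omega^2_X(aH+U))$ as an alternating sum of $\chi(Y, \Omega^p_Y \otimes L)$ and $\chi(Y, L)$ for explicit line bundles $L$ and $p \in \{1,2\}$. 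Since $A^i(\mathbb{P}^1) = 0$ for $i \geq 2$, only $c_1(\mathcal{E}) = \sum a_i$ survives as a nontrivial Chern class of $\mathcal{E}$, so HRR on $Y$ (equivalently, the projection formula along $Y \to \mathbb{P}^1$) reduces to a polynomial computation. The principal obstacle is the bookkeeping across these nested exact sequences on a 4-fold and verifying that the resulting polynomial collapses to $-2(\sum a_i + 2k)$ independently of $a$.

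For (3), set $\pi : X = \mathbb{P}(\mathcal{E}) \to \mathbb{P}^2$. Taking $\wedge^2$ of the relative cotangent sequence $0 \to \pi^*\Omega^1_{\mathbb{P}^2} \to \Omega^1_X \to \Omega^1_\pi \to 0$ yields
$$0 \to \pi^*K_{\mathbb{P}^2} \to \Omega^2_X \to \pi^*\Omega^1_{\mathbb{P}^2} \otimes \Omega^1_\pi \to 0,$$
and the relative Euler sequence gives $\Omega^1_\pi \otimes \mathcal{O}_X(U) \cong \pi^*\det\mathcal{E} \otimes \mathcal{O}_X(-U)$. After twisting by $\mathcal{O}_X(H+U)$, the right-hand term becomes $\pi^*(\Omega^1_{\mathbb{P}^2}(c_1+1)) \otimes \mathcal{O}_X(-U)$, whose $R^i\pi_*$ vanishes identically since $\mathcal{O}_X(-U)$ restricts to $\mathcal{O}_{\mathbb{P}^1}(-1)$ on each fibre. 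Consequently $R\pi_* \Omega^2_X(H+U) = \pi_* \mathcal{O}_X(-2H+U) = \mathcal{E}(-2)$, so $\chi(X, \Omega^2_X(H+U)) = \chi(\mathbb{P}^2, \mathcal{E}(-2))$; a one-line HRR on $\mathbb{P}^2$ gives the claimed $\binom{c_1}{2} - c_2$. For the $h^0$ formula, the same push-forward argument gives $H^0(X, \Omega^2_X(H+U)) = H^0(\mathbb{P}^2, \mathcal{E}(-2))$, and applying Serre duality on $\mathbb{P}^2$ together with the rank-$2$ identity $\mathcal{E}^\vee \cong \mathcal{E}(-c_1)$ rewrites this as $h^2(\mathbb{P}^2, \mathcal{E}(-c_1-1))$.
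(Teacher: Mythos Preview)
Your approach to parts (1) and (2) is correct and essentially parallel to the paper's. The only cosmetic difference is that the paper first Serre-dualizes to $\Omega^1_X(K_X-H)$ (respectively $\Omega^1_X(-aH-U)$) and then runs Hirzebruch--Riemann--Roch and the conormal/Koszul reductions on those bundles, whereas you work directly with $T_X(H)\cong\Omega^2_X(H-K_X)$ and with the $\wedge^2$-conormal sequence. The elimination of $c_1c_2$ and $c_3$ via $\chi(\mathcal{O}_X)=1$ and $\chi(\Omega^1_X)=h-2$ is exactly what the paper does.

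Part (3) is where your argument genuinely diverges from the paper's, and yours is considerably cleaner. The paper Serre-dualizes to $\chi(X,\Omega^1_X(-H-U))$, embeds this value into a one-parameter family $Q(b)=\chi(X,\Omega^1_X(-H+bU))$, pushes forward for $b\gg 0$, and then carries out a lengthy computation of Chern classes of $S^b\mathcal{E}$ and of three auxiliary polynomials $Q_1,Q_2,Q_3$ before evaluating at $b=-1$. You instead filter $\Omega^2_X$ directly via $\wedge^2$ of the relative cotangent sequence and observe that after twisting by $H+U$ the quotient term has the form $\pi^*(\cdots)\otimes\mathcal{O}_X(-U)$, hence is $R\pi_*$-acyclic; this gives $R\pi_*\Omega^2_X(H+U)\cong\mathcal{E}(-2)$ in one stroke, after which both the $\chi$-formula and the $h^0$-formula follow from Riemann--Roch and Serre duality on $\mathbb{P}^2$. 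What the paper's longer route buys is perhaps a template more obviously reusable for other twists $aH+bU$, but for the specific twist $H+U$ your direct pushforward is decisively shorter and more transparent.
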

\begin{proof}   In what follows, we prove statements (1), (2) and (3) of the theorem.

    \underline{$(1)$}: We will use the Hirzebruch-Riemann-Roch theorem for $3$-folds. It says the following. Let $X$ be a smooth projective $3$-fold, $E$ a vector bundle on $X$ of rank $r$. Then we have
    \begin{equation}\label{hrr}
        \chi(X, E)=\frac{rc_1c_2}{24}+e_1\cdot\frac{c_1^2+c_2}{12}+\frac{c_1}{2}\cdot\frac{e_1^2-2e_2}{2}+\frac{e_1^3-3e_1e_2+3e_3}{6},
    \end{equation} (see {\cite[Proof of Theorem 3.1.1]{stark2024cone}}). Here $c_i=c_i(T_X),$ $e_i=c_i(E)$.

    In our setup, we have by Serre duality 
    \begin{equation}\label{Serre1}
        -\chi(X, \Omega^2_X(H-K_X))=\chi(X,\Omega_X(-H+K_X)).
    \end{equation} Let $E=\Omega_X(-H+K_X),$ $e_i=c_i(E), c_i=c_i(T_X).$ By Kawamata–Viehweg vanishing, $H^i(X,\mathcal{O}_X)=0$ for all $i>0.$ So, $c_3=$ topological Euler characteristics of $X=6-2h$, using $\mathrm{Pic}(X)\cong H^2(X, \mathbb{Z})\cong \mathbb{Z}^2.$ Also, $c_1c_2=24$ by putting $E=\mathcal{O}_X$ in \eqref{hrr}.

    A calculation shows
    \begin{align*}
        & e_1=-4c_1-3H, \\ &e_2=c_2+5c_1^2+8c_1H+3H^2,\\ & e_3=-(2c_1^3-2h+30+c_2H+5c_1^2H+4c_1H^2+H^3).
    \end{align*}

    Putting these values in \eqref{hrr} and using \eqref{Serre1}, we get the result.

    \underline{$(2)$}: Tensoring $\mathcal{E}$ by a line bundle if necessary, we may assume without loss of generality that two of the $a_i$'s are $0$. Let the other two be $p$ and $q$. So, $\mathcal{E}=\mathcal{O}_{\mathbb{P}^1}^2\oplus \mathcal{O}_{\mathbb{P}^1}(p)\oplus \mathcal{O}_{\mathbb{P}^1}(q).$ By Serre duality,
    \begin{equation}\label{Serre2}
        -\chi(X, \Omega^2_X(aH+U))=\chi(X,\Omega_X(-aH-U)).
    \end{equation}

    Let $W=\mathbb{P}(\mathcal{E})$. As in the proof of {\cite[Lemma 6.5]{totaro2023endomorphisms}}, we have short exact sequences:
    \begin{equation}
        0\to \mathcal{O}_X(-aH-U-X)\to \Omega_W(-aH-U)|_X\to  \Omega_X(-aH-U)\to 0,
    \end{equation}
    \begin{equation}
        0\to \Omega_W(-aH-U-X)\to\Omega_W(-aH-U) \to \Omega_W(-aH-U)|_X \to 0,
    \end{equation}
    and
    \begin{equation}
        0\to \mathcal{O}_W(-aH-U-2X)\to\mathcal{O}_W(-aH-U-X)\to \mathcal{O}_X(-aH-U-X)\to 0.
    \end{equation}

    Together with $\mathcal{O}_W(X)=kH+2U$, these give:
    \begin{equation}\label{chi}
    \begin{aligned}
        \chi(X,\Omega_X(-aH-U)) &=\chi(W,\Omega_W(-aH-U))-\chi(W,\Omega_X(-(a+k)H-3U))\\ & -\chi(W,-(a+k)H-3U)+\chi(W,-(a+2k)H-5U)).
    \end{aligned}
    \end{equation}

    For integers $x$ and $y$, let $$f(x,y)=\chi(W, xH+yU).$$ Note that $W$ is a smooth projective toric variety, whose toric boundary has $6$ irreducible components, linearly equivalent to $H, H, U, U, U-pH, U-qH$, respectively. By Euler-Jaczewsky sequence as in \cite{occhetta2002euler}, we have a short exact sequence
    \begin{equation}
        0\to \Omega_W\to \mathcal{O}_W(-H)^2\oplus \mathcal{O}_W(-U)^2\oplus \mathcal{O}_W(pH-U)\oplus \mathcal{O}_W(qH-U)\to \mathcal{O}_W^2\to 0.
    \end{equation}

    This shows 
    \begin{equation}
        \chi(W,\Omega_W( xH+yU))=2f(x-1,y)+2f(x,y-1)+f(x+p,y-1)+f(x+q,y-1)-2f(x,y),
    \end{equation} for integers $x$ and $y.$

Plugging it into \eqref{chi}, we get
\begin{equation}\label{chi f}
    \begin{aligned}
        \chi(X,\Omega_X(-aH-U)) &= 2f(-a-1,-1)+2f(-a,-2)-2f(-a,-1) \\
        &+ f(-a+p, -2) + f(-a+q, -2) \\
        &-2f(-a-k-1, -3)-2f(-a-k, -4)+f(-a-k,-3) \\
        &-f(-a-k+p,-4)-f(-a-k+q,-4)\\
        &+f(-a-2k,-5).
    \end{aligned}
\end{equation}

Now we find a formula for $f(x, y).$ Let $p:W\to \mathbb{P}^1$ be the projection. Note that for $y\gg 0$, we have $R^ip_*\mathcal{O}_W(xH+yU)=0$ for $i>0,$ and $p_*\mathcal{O}_W(xH+yU)=S^y(\mathcal{E})(x)$. So, by spectral sequence we have for all $i$,
$$H^i(W, xH+yU)=H^i(\mathbb{P}^1, S^y(\mathcal{E})(x))$$ for $y \gg 0.$ Hence, 
\begin{equation}\label{f}
    f(x,y)=\chi(\mathbb{P}^1, S^y(\mathcal{E})(x)),
\end{equation} for $y\gg 0$.

We have rank $S^y(\mathcal{E})={y+3\choose 3}$, $\deg S^y(\mathcal{E})=\deg \mathcal{E}. {y+3\choose 4}=(p+q){y+3\choose 4}$. So, by Riemann-Roch, 
\begin{align*}
    \chi(\mathbb{P}^1, S^y(\mathcal{E})(x))&=\deg S^y(\mathcal{E})(x)+ \mathrm{rank} S^y(\mathcal{E})(x)\\ &= \deg S^y(\mathcal{E})+ (x+1)\mathrm{rank} S^y(\mathcal{E})\\ &=(p+q){y+3\choose 4}+(x+1){y+3\choose 3}.
\end{align*} 

So by \eqref{f},
\begin{equation}\label{f formula}
    \begin{aligned}
    f(x,y)&=\deg S^y(\mathcal{E})(x)+ \mathrm{rank} S^y(\mathcal{E})(x)\\ &= \deg S^y(\mathcal{E})+ (x+1)\mathrm{ rank} S^y(\mathcal{E})\\ &=(p+q){y+3\choose 4}+(x+1){y+3\choose 3},
\end{aligned} 
\end{equation} whenever $y\gg0.$ Since both sides of \eqref{f formula} are polynomials in $x,y$ by Riemann-Roch, \eqref{f} holds for every pair of integers $x$ and $y.$

By \eqref{f formula}, we get $f(x,y)=0$ if $-3\leq y<0.$ Plugging \eqref{f formula} into \eqref{chi f}, we get

\begin{align*}
    \chi(X,\Omega_X(-aH-U)) &=-2f(-a-k,-4)-f(-a-k+p,-4)\\ &-f(-a-k+q,-4)+f(-a-2k,-5)\\ &=(p+q).(-2{-1\choose4}-{-1\choose4}-{-1\choose4}+{-2\choose4})\\ & +2k{-1\choose3}+(k-p){-1\choose 3} +(k-q){-1\choose3} -2k{-2\choose3}\\
    & +(1-a)(-2{-1\choose 3}-{-1\choose3}-{-1\choose3}+{-2\choose3})\\
    &=2(p+q+2k).
\end{align*}
Now by \eqref{Serre2}, we get the result.

\underline{$(3)$}: By Serre duality,
    \begin{equation}\label{Serre3}
        -\chi(X, \Omega^2_X(H+U))=\chi(X,\Omega_X(-H-U)).
    \end{equation} 
    
    Let $f:X\to \mathbb{P}^2$ be the projection. For an integer $b$, let $Q(b)=\chi(X,\Omega_X(-H+bU)).$ By Hirzebruch-Riemann-Roch theorem, $Q(b)$ is a polynomial in $b$. We
    have short exact sequences:
    \begin{equation}\label{omega}
        0\to f^*\Omega_{\mathbb{P}^2}(-H+bU)\to \Omega_X(-H+bU)\to \Omega_{X/\mathbb{P}^2}(-H+bU))\to 0,
    \end{equation}
    \begin{equation}\label{euler}
        0\to \Omega_{X/\mathbb{P}^2}(-H+bU))\to (f^*\mathcal{E})(-H+(b-1)U)\to \mathcal{O}_X(-H+bU)\to 0.
    \end{equation}
By {\cite[Exercise 8.4, Chapter 3]{hartshorne2013algebraic}}, we have $R^1f_*\mathcal{O}_X(-2U)=\mathcal{O}_{\mathbb{P}^2}(-c_1)$ and $R^if_*\mathcal{O}_X(-2U)=0$ for all $i$. So, putting $b=-1$ in \eqref{omega} and \eqref{euler} and applying $f_*$ we get isomorphisms $$R^1f_*\Omega_X(-H-U)\cong R^1f_*\Omega_{X/\mathbb{P}^2}(-H-U)\cong \mathcal{E}(-c_1-1).$$ Now by Serre duality and spectral sequence, we get $$h^0(X,\Omega^2_X(H+U))=h^3(X,\Omega_X(-H-U))=h^2(\mathbb{P}^2,R^1f_*\Omega_X(-H-U))=h^2(\mathbb{P}^2,\mathcal{E}(-c_1-1)).$$ This proves the second statement of part $3$ of the theorem.

Now we proceed to prove the first statement of part $3$ of the theorem.
    As $U$ is $f$-ample, we have 
    \begin{equation}\label{rel ample}
        R^if_*\Omega_X(-H+bU)=R^if_*\Omega_{X/\mathbb{P}^2}(-H+bU)=0
  \end{equation} for $i>0$ and $b\gg0$. So, using spectral sequence we have for all $i,$ $$H^i(X,\Omega_X(-H+bU))=H^i(\mathbb{P}^2, f_* \Omega_X(-H+bU)) $$ for $b\gg 0.$ Hence,
    \begin{equation}
        Q(b)= \chi(\mathbb{P}^2, f_* \Omega_X(-H+bU))
    \end{equation} for $b\gg 0.$

    Using \eqref{rel ample} and applying $f_*$ to \eqref{omega} and \eqref{euler}, we have short exact sequences for $b\gg 0$:
    \begin{equation}
        0\to \Omega_{\mathbb{P}^2}(-1)\otimes S^b\mathcal{E}\to f_*\Omega_X(-H+bU)\to f_*\Omega_{X/\mathbb{P}^2}(-H+bU)\to 0,
    \end{equation}
    \begin{equation}
        0\to f_*\Omega_{X/\mathbb{P}^2}(-H+bU)\to \mathcal{E}(-1)\otimes S^{b-1}\mathcal{E}\to (S^b\mathcal{E})(-1)\to 0.
    \end{equation}

    Hence, for $b\gg 0$ we have
    \begin{equation}\label{Q}
        Q(b)=\chi(\mathbb{P}^2, \Omega_{\mathbb{P}^2}(-1)\otimes S^b\mathcal{E})+ \chi(\mathbb{P}^2,\mathcal{E}(-1)\otimes S^{b-1}\mathcal{E} )-\chi(\mathbb{P}^2, (S^b\mathcal{E})(-1) ).
    \end{equation}

    There are polynomials $Q_1(u), Q_2(u), Q_3(u), C_1(u), C_2(u), A_1(u), A_2(u)$ such that for all $b\gg 0$ we have:
    \begin{align*}
        & Q_1(b)=\chi(\mathbb{P}^2, \Omega_{\mathbb{P}^2}\otimes (S^b\mathcal{E})(-1)),\\ 
        & Q_2(b)=\chi(\mathbb{P}^2,\mathcal{E}\otimes (S^{b-1}\mathcal{E})(-1) ),\\
        & Q_3(b)= \chi(\mathbb{P}^2, (S^b\mathcal{E})(-1) ),\\ 
        & C_1(b)=c_1(S^b\mathcal{E}),\\
        & C_2(b)=c_2(S^b\mathcal{E}), \\
        & A_1(b)= c_1((S^b\mathcal{E})(-1)),\\
        & A_2(b)= c_2((S^b\mathcal{E})(-1)).
    \end{align*}

    By \eqref{Q}, we have 
    \begin{equation}\label{Qsum}
        Q(u)=Q_1(u)+Q_2(u)-Q_3(u).
    \end{equation}

One sees:
\begin{equation}\label{sbe}
    \begin{aligned}
        &C_1(b)=c_1b(b+1)/2,\\
        & C_2(b)=c_1^2b(b^2-1)(3b+2)/24 +c_2{b+2\choose 3},
    \end{aligned}
\end{equation}
for all $b\gg 0.$ Since both sides of \eqref{sbe} are polynomials in $b$, \eqref{sbe} holds for all integer $b.$ In particular, we have 
\begin{equation}\label{c-1-2}
    \begin{aligned}
        & C_1(-1)=C_2(-1)=0,\\
        &C_1(-2)=c_1, C_2(-2)=c_1^2.
    \end{aligned}
\end{equation}

    To compute the polynomials $Q_i$'s, we will use Hirzebruch-Riemann-Roch theorem for vector bundles on $\mathbb{P}^2.$ It says the following. Let $\mathcal{F}$ be a vector bundle of rank $r$ on $\mathbb{P}^2$ with $c_i=c_i(\mathcal{F})$ for $i=1,2.$ Then we have
    \begin{equation}\label{hrr1}
        \chi(\mathbb{P}^2, \mathcal{F})=r-c_2+c_1(c_1+3)/2.
    \end{equation}

    We will also need the following formula of Chern classes of a tensor product of vector bundles on $\mathbb{P}^2$. Let $\mathcal{F}_1,\mathcal{F}_2 $ be vector bundles on $\mathbb{P}^2$ of ranks $r$ and $s$ respectively, and suppose $c_i(\mathcal{F}_1)=c_i, c_i(\mathcal{F}_2)=d_i$ for $i=1,2.$ Then we have:
    \begin{equation}\label{chernproduct}
        \begin{aligned}    &c_1(\mathcal{F}_1\otimes\mathcal{F}_2)=sc_1+rd_1,\\
  & c_2(\mathcal{F}_1\otimes\mathcal{F}_2)=sc_2+rd_2+{s\choose2} c_1^2+{r\choose 2}d_1^2+c_1d_1(rs-1).
        \end{aligned}
    \end{equation}
 Since rank $S^b(\mathcal{E})=b+1,$
    these give:
    \begin{equation}\label{AC}
    \begin{aligned}
        &A_1(u)=C_1(u)-(u+1),\\
        &A_2(u)=C_2(u) +{u+1\choose 2}-uC_1(u).
    \end{aligned}
    \end{equation}

So, \begin{equation}\label{a-1-2}
    \begin{aligned}
        &A_1(-1)=A_2(-1)=0,\\
        &A_1(-2)=c_1+1, A_2(-2)=(c_1+1)^2.
    \end{aligned}
\end{equation}
    Since $c_1(\Omega_{\mathbb{P}^2})=-3$ and $c_2(\Omega_{\mathbb{P}^2})=3$, \eqref{chernproduct} gives:
    \begin{equation}\label{ct}
        \begin{aligned}
            & c_1(\Omega_{\mathbb{P}^2}\otimes (S^b\mathcal{E})(-1))=2A_1(b)-3(b+1),\\
            &c_2(\Omega_{\mathbb{P}^2}\otimes (S^b\mathcal{E})(-1))=2A_2(b)+3(b+1)+A_1(b)^2+9{b+1\choose 2}-3(2b+1)A_1(b),\\
            & c_1(\mathcal{E}\otimes (S^{b-1}\mathcal{E})(-1) )=2A_1(b-1)+bc_1,\\
            & c_2(\mathcal{E}\otimes (S^{b-1}\mathcal{E})(-1) )=2A_2(b-1)+bc_2+A_1(b-1)^2+{b\choose 2}c_1^2 +(2b-1)c_1A_1(b-1),
        \end{aligned}
    \end{equation}
     for all $b\gg 0.$

     So, by \eqref{hrr1}, we get for all $b\gg 0$,
     \begin{equation}\label{Q1}
         \begin{aligned}
             Q_1(b)=&2(b+1)-(2A_2(b)+3(b+1)+A_1(b)^2+9{b+1\choose 2}-3(2b+1)A_1(b))\\
             &+(2A_1(b)-3(b+1))(2A_1(b)-3b)/2,
         \end{aligned}
    \end{equation} and
    \begin{equation}\label{Q2}
         \begin{aligned}
             Q_2(b)=&2b-(2A_2(b-1)+bc_2+A_1(b-1)^2+{b\choose 2}c_1^2 +(2b-1)c_1A_1(b-1))\\
             &+(2A_1(b-1)+bc_1)(2A_1(b-1)+bc_1+3)/2,
         \end{aligned}
    \end{equation} and 
    \begin{equation}\label{Q3}
     Q_3(b)=(b+1)-A_2(b)+A_1(b)(A_1(b)+3)/2.
    \end{equation}

    Since both sides of \eqref{Q1}, \eqref{Q2} and \eqref{Q3} are polynomials in $b$, we see that these equations are valid for all integers $b.$ So by \eqref{a-1-2} we get
    \begin{align}
        &Q_1(-1)=Q_3(-1)=0,\\
        & Q_2(-1)=c_2-{c_1\choose{2}}.
    \end{align}

    Now by \eqref{Qsum}, we have $Q(-1)=c_2-{c_1\choose{2}}.$ Finally, \eqref{Serre3} completes the proof.
\end{proof}
   \begin{corollary}\label{bott}
       Let $X$ be a smooth projective weak Fano $3$-fold with $\rho(X)=2$, and suppose $X$ is not Fano. If $X$ is as in the following cases, then $X$ does not have Bott vanishing.
       \begin{enumerate}
           \item $X$ is a degree $6$ del Pezzo fibration over $\mathbb{P}^1$.
           \item $X$ is a degree $8$ del Pezzo fibration over $\mathbb{P}^1$.
           \item $X$ is a conic bundle over $\mathbb{P}^2$ and $X\not\cong\mathbb{P}_{\mathbb{P}^2}(\mathcal{O}_{\mathbb{P}^2}\oplus \mathcal{O}_{\mathbb{P}^2}(3)).$ Also, $X$ is not as in {\cite[No. 1, Table 7.7]{jahnke2011threefolds}}.
           \item $X$ is as in {\cite[Table 8]{cutrone2013towards}}.
           \item $X$ is as in {\cite[Table 9]{cutrone2013towards}}.
           \item $X$ is as in {\cite[No.1, Table 7.5]{jahnke2004threefolds}}.
       \end{enumerate}
   \end{corollary}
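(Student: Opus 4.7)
The plan is to show in each of the six cases that there exists an ample line bundle $L$ on $X$ with either $\chi(X, \Omega^2_X \otimes L) < 0$, which by $h^0 \geq 0$ immediately obstructs Bott vanishing, or more generally $\chi(X, \Omega^2_X \otimes L) \neq h^0(X, \Omega^2_X \otimes L)$, which still forces a nonzero $H^j$ with $j > 0$. The three parts of Theorem \ref{rr} supply $\chi$-formulas tailored to three classes of test bundles: $L = H - K_X$ in part (1) for any weak-Fano $3$-fold, $L = aH + U$ on a divisor $X \subset \mathbb{P}(\mathcal{E})$ over $\mathbb{P}^1$ in part (2), and $L = H + U$ on $X = \mathbb{P}(\mathcal{E}) \to \mathbb{P}^2$ in part (3).

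A preliminary step is to verify ampleness of the chosen $L$ in each case. For Theorem \ref{rr}(1), on a weak Fano of Picard rank $2$ with extremal contractions $\phi, \psi$, the class $H - K_X$ is positive on the $\phi$-ray ($H$ trivial there, $-K_X$ strictly positive) and on the $\psi$-ray ($H$ strictly positive, $-K_X$ nonnegative), so it is ample by Kleiman. For parts (2) and (3) the test class contains $U$ and a multiple of $H$, so one checks ampleness on the fibers and on a minimal section using the hypotheses on the $a_i$'s or on $\mathcal{E}(1)$.

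For cases (1) and (2) I would appeal to the classifications of weak-Fano degree $6$ and degree $8$ del Pezzo fibrations over $\mathbb{P}^1$ with $\rho(X)=2$ that are not Fano (from Jahnke-Peternell-Radloff and Cutrone-Marshburn). Each such $X$ embeds as a smooth divisor of class $2U + kH$ in some $\mathbb{P}(\mathcal{E})$ with $\mathcal{E}$ a rank $4$ decomposable bundle on $\mathbb{P}^1$ whose summands are not all distinct (quadric surface bundles for degree $8$, and a $(kH + 2U)$ realization from the cited tables for degree $6$). For each family I would read off $a_0, \ldots, a_3$ and $k$, pick an integer $a$ so that $aH + U$ is ample, and apply Theorem \ref{rr}(2) to show $\sum_i a_i + 2k > 0$, hence $\chi(X, \Omega^2_X(aH + U)) < 0$. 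For case (3), the surviving weak-Fano smooth conic bundles $X \to \mathbb{P}^2$ with $\rho = 2$ are $\mathbb{P}^1$-bundles $\mathbb{P}(\mathcal{E})$ with $\mathcal{E}(1)$ ample (Szurek-Wi\'sniewski); in each such family I apply Theorem \ref{rr}(3) and check that either $c_2 - \binom{c_1}{2} > 0$, or that $h^2(\mathbb{P}^2, \mathcal{E}(-c_1 - 1))$ is strictly greater than the value of $\chi$, forcing some $h^j(\Omega^2_X(H+U)) \neq 0$ with $j > 0$. For cases (4), (5), (6), the referenced tables provide enough of $H^3, c_1^3, c_2 H, c_1^2 H, c_1 H^2$ and $h^2(\Omega^1_X)$ (the last recovered from the Leray spectral sequence of $\phi$ and the Hodge numbers of the surface fibers if not listed) that substitution into Theorem \ref{rr}(1) yields $-\chi(X, \Omega^2_X(H - K_X)) > 0$ in every subcase.

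The main obstacle is the case-by-case bookkeeping. For cases (4)--(6) in particular, one must match each row of the cited tables to a concrete geometric model, read off or recompute every intersection number and Hodge number, and then verify the sign after substitution; whenever the cited table does not directly record $h^2(\Omega^1_X)$, a separate argument is required. A secondary subtlety is the potential existence of borderline families where $-\chi = 0$, in which case one must instead show $h^0(X, \Omega^2_X \otimes L) \neq 0$ directly, via the explicit $h^0$-formula in Theorem \ref{rr}(3) for the conic bundle situation, and by an analogous pushforward-plus-spectral-sequence computation (using the short exact sequences appearing in the proof of Theorem \ref{rr}(2)) in the other settings.
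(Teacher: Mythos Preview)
Your overall strategy and your treatment of cases (4)--(6) via Theorem \ref{rr}(1) match the paper's. There are, however, two genuine gaps in the other cases.

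In case (1) you propose realizing each degree $6$ del Pezzo fibration as a smooth member of $|kH+2U|$ in a $\mathbb{P}^3$-bundle over $\mathbb{P}^1$ and then applying Theorem \ref{rr}(2). This is impossible: a divisor of class $kH+2U$ meets each $\mathbb{P}^3$-fiber in a quadric surface, so its general fiber has $K_F^2=8$, never $6$; a degree $6$ del Pezzo surface is simply not a hypersurface in $\mathbb{P}^3$. The paper instead handles case (1) via Theorem \ref{rr}(1): restricting to a general fiber $F$ gives $c_1^2H=K_F^2=6$, $c_2H=e(F)=6$, $c_1H^2=H^3=0$, whence $-\chi(X,\Omega^2_X(H-K_X))=13+h-c_1^3/2$, and positivity is then checked against Fukuoka's classification table. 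In case (3) your assertion that every weak Fano conic bundle over $\mathbb{P}^2$ of Picard rank $2$ is a $\mathbb{P}^1$-bundle is false: several families in the Jahnke--Peternell--Radloff tables have nonempty discriminant locus, and Theorem \ref{rr}(3) says nothing about them. The paper accordingly splits case (3): for $\mathbb{P}^1$-bundles it uses Theorem \ref{rr}(3) as you describe (including one borderline family with $-\chi=0$ but $h^0=h^2(\mathbb{P}^2,\mathcal{E}(-4))=1$), while for conic bundles with discriminant of degree $d>0$ it falls back on Theorem \ref{rr}(1), computing $c_1^2H=12-d$, $c_1H^2=2$, $c_2H=d+6$, $H^3=0$ from the preimage of a general line to get $-\chi=h-c_1^3/2+2d+3$, and then checks the tables.
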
 
   \begin{proof}

   Note that if $X$ satisfies Bott vanishing, then for all ample line bundles $L$ on $X$, we have $\chi(X, \Omega^2_X\otimes L)\geq 0$. Also, if $\chi(X, \Omega^2_X\otimes L)=0$, then we must have $h^0(X, \Omega^2_X\otimes L)=0$. This will be the basis of our argument in each case.
   
       \underline{$(1):$} Let $F$ be a general fibre of the del Pezzo fibration. In the notation of \ref{rr}($1$), we have 
       \begin{align*}
           &c_1H^2=H^3=0,\\
           &  c_1^2H=K_X^2F=K_F^2=6,\\
           & c_2H=c_2(T_X|_F)=c_2(T_F)=\text{topological Euler characteristics of }F=6.
       \end{align*} So, \ref{rr}($1$) gives
       $$-\chi(X, \Omega^2_X(H-K_X))=13+h-\frac{c_1^3}{2}.$$ Now the result follows by looking at {\cite[Table 1]{fukuoka2019refinement}}.
       
    \underline{$(2):$}  If the anticanonical contraction of $X$ is a small contraction, $X$ is as in {\cite[Theorem 2.3]{takeuchi2022weak}}. Now Theorem \ref{rr}($2$) shows $X$ does not have Bott vanishing.

    Now suppose the anticanonical contraction of $X$ is a divisorial contraction. By {\cite[Section 4]{takeuchi2022weak}}, $X$ is one of the following:
    \begin{enumerate}
        \item [$(i):$] {\cite[(4.3.3)]{takeuchi2022weak}} ,
        \item [$(ii):$] {\cite[(4.3.6)]{takeuchi2022weak}},
        \item [$(iii):$] {\cite[(4.3.7)]{takeuchi2022weak}},
        \item [$(iv):$] {\cite[No. 11, Table 1]{takeuchi2022weak}}.
    \end{enumerate} Also, as in the numbering of {\cite[Table 7.1]{jahnke2004threefolds}}, $X$ is one of the following:
    \begin{enumerate}
        \item[$(i)':$] No. 9,
        \item [$(ii)':$]No. 12,
        \item [$(iii)':$] No. 14,
        \item[$(iv)':$] No. 15.
    \end{enumerate}

    Now a comparison using $-K_X^3$ and {\cite[Theorem 2.9]{jahnke2004threefolds}} shows $$(i)=(iii)', (ii)=(iv)', (iii)=(i)', (iv)=(ii)'.$$

    From this and the value of $-K_X^3$ obtained from {\cite[Table 7.1]{jahnke2004threefolds}} gives the values of $k$:
    \begin{align*}
        &(i):k=0,\\
        &(ii): k=0,\\
        &(iii): k=-1,\\
        &(iv): k=2.
        \end{align*}
        Now Theorem \ref{rr}($2$) shows $X$ does not have Bott vanishing.

        \underline{$(3):$} Let us consider two cases separately.
        
        \textbf{Case 1:} $X$ is a $\mathbb{P}^1$-bundle.
        
        In this case, $X$ is as in {\cite[No. 2,3,4, Table A.3]{jahnke2004threefolds}}, or {\cite[Table 7.5]{jahnke2011threefolds}}, or {\cite[No. 1, Table 7.6]{jahnke2011threefolds}} or as in $X^+$ of {\cite[No. 1, Table 7.2]{jahnke2011threefolds}}. In the first $3$ cases, $\mathcal{E}:=\mathcal{F}(2)$ is nef but not ample by the proof of {\cite[Theorem 3.4]{jahnke2004threefolds}}, in the other $3$ cases $\mathcal{E}:=\mathcal{F}$ or $\mathcal{F}^+$ is nef but not ample by {\cite[Theorem 2.13]{jahnke2011threefolds}}. 

        Now by Theorem \ref{rr}$(3)$, $X$ does not have Bott vanishing except possibly in the first case, where we have $-\chi(X, \Omega^2_X(H+U))=0$. But if $X$ has Bott vanishing in this case, we must have $h^0(X, \Omega^2_X(H+U))=0,$ hence $h^2(\mathbb{P}^2,\mathcal{E}(-4))=0.$ But a computation using the description of $\mathcal{F}$ by the short exact sequence in {\cite[Theorem 3.4(2)]{jahnke2004threefolds}} shows $h^2(\mathbb{P}^2,\mathcal{E}(-4))=1,$ a contradiction. So, $X$ does not have Bott vanishing.

        \textbf{Case 2:} $X$ is a conic bundle, but not a $\mathbb{P}^1$-bundle.

        We use the notation of Theorem \ref{rr}$(1)$. We have $Y=\mathbb{P}^2$ and $\phi$ a conic bundle. Let $d>0$ be the degree of the discriminant locus of $\phi.$ Let $L$ be a general line in $\mathbb{P}^2$, $F=\phi^{-1}(L)$. So, $F$ is the blow up of a Hirzebruch surface at $d$ points. A computation similar to the proof of part $(1)$ shows 
        \begin{align*}
           & c_1^2H=12-d,\\
           & c_1H^2=2,\\
           & c_2H=d+6,\\
           & H^3=0.
        \end{align*}
So, \ref{rr}($1$) gives
       $$-\chi(X, \Omega^2_X(H-K_X))= h-\frac{c_1^3}{2}+2d+3.$$

       Now the result follows by looking at {\cite[Table A.3]{jahnke2004threefolds}}, {\cite[Tables 7.2, 7.6, 7.7]{jahnke2011threefolds}}.
        
        \underline{$(4):$} Let $E$ be the exceptional divisor of the $K_X$-negative contraction of $X$. In the notation of Theorem \ref{rr}$(1)$, we have 
        \begin{equation}\label{discrep}
            -K_X=H-E.
        \end{equation}We have $c_2.E=c_2(T_X|_E)=0,$ $c_1^2.E=K^2E=2.$ Using \eqref{discrep} and the numerics in {\cite[Table 8]{cutrone2013towards}}, we get the following numerics for each case in the Table:
        
        No. 1: $c_1^3=4, c_1^2H=H^3=c_1H^2=6, c_2H=24$,

        No. 2: $c_1^3=2, c_1^2H=H^3=c_1H^2=4, c_2H=24$.

        Now Theorem \ref{rr}$(1)$ shows $X$ does not have Bott vanishing.
        
        \underline{$(5):$} Let $E$ be the exceptional divisor of the $K_X$-negative contraction of $X$. In the notation of Theorem \ref{rr}$(1)$, we have 
        \begin{equation}\label{discrep1}
            H=E-2K_X.
        \end{equation}We have $c_2.E=c_2(T_X|_E)=-3,$ $c_1^2.E=K^2E=1.$ Using \eqref{discrep1} and the numerics in {\cite[Table 9]{cutrone2013towards}}, we get the following numerics:
$$c_1^3=2, c_1^2H=5, H^3=20, c_1H^2=10, c_2H=45.$$
Now Theorem \ref{rr}$(1)$ shows $X$ does not have Bott vanishing.

        \underline{$(6):$} Proof is identical to the proof of the $4$-th part of the corollary.
   \end{proof}
\section{Images of toric varieties}
In this section we will prove Theorem \ref{A}. First we need the following lemmas.
\begin{lemma}\label{push}
    Let $f:X\to Y$ be a surjective morphism of normal projective varieties. Let $D_X$ be a sum of prime divisors in $X$ such that $(X, D_X)$ is toric image (see (2) in Preliminaries). Let $D_Y$ be a sum of prime divisors in $Y$ such that $D_Y$ is Cartier. Suppose $f^{-1}(D_Y)_{\rm red}\leq D_X$. Then $(Y, D_Y)$ is toric image.
\end{lemma}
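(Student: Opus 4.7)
The plan is to realize $(Y, D_Y)$ as a toric image by composing a witness for $(X, D_X)$ with $f$. Invoking the Stein-factorization reduction at the end of item (2) of the Preliminaries, I would first choose a proper toric variety $Z$ and a finite surjective morphism $g \colon Z \to X$ such that, writing $D_X = \sum_i D_i$, every codimension-one irreducible component of each $g^{-1}(D_i)$ is a toric divisor of $Z$. I then set $h := f \circ g \colon Z \to Y$. This morphism is surjective since both $f$ and $g$ are, and my claim is that $h$ exhibits $(Y, D_Y)$ as a toric image.

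Write $D_Y = \sum_j E_j$ with each $E_j$ prime. The condition to check is that, for every $j$, each codimension-one irreducible component $W$ of $h^{-1}(E_j)$ is a toric divisor of $Z$. Since $g$ is finite and $W$ is an irreducible codimension-one subvariety of $Z$, the image $g(W)$ is a closed irreducible subvariety of $X$ of the same dimension as $W$, hence a prime divisor in $X$. Because $g(W) \subset f^{-1}(E_j) \subset f^{-1}(D_Y)$, and the Cartier hypothesis on $D_Y$ forces $f^{-1}(D_Y)$ to have pure codimension one in $X$, the prime divisor $g(W)$ is itself a codimension-one component of $f^{-1}(D_Y)$, i.e., appears in $f^{-1}(D_Y)_{\mathrm{red}}$. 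The hypothesis $f^{-1}(D_Y)_{\mathrm{red}} \leq D_X$ then identifies $g(W)$ with some $D_i$. Hence $W$ is a codimension-one irreducible component of $g^{-1}(D_i)$, and the toric-image property of $(X, D_X)$ guarantees that $W$ is a toric divisor of $Z$, as required.

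Structurally the argument is short, and there is no single hard step. The two points worth flagging are that one really needs $g$ to be finite, so that prime divisors of $Z$ push down to prime divisors of $X$ rather than to subvarieties of higher codimension, and that the Cartier hypothesis on $D_Y$ is used exactly to rule out higher-codimension irreducible components of $f^{-1}(D_Y)$ which would otherwise obstruct the identification of $g(W)$ with a prime component of $f^{-1}(D_Y)_{\mathrm{red}}$.
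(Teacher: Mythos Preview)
Your proof is correct and is precisely the expansion of the paper's one-line argument ``Immediate from the definition'': compose the finite witness $g\colon Z\to X$ for $(X,D_X)$ with $f$ to obtain a witness $h\colon Z\to Y$ for $(Y,D_Y)$. The paper leaves the verification implicit, but your unpacking of the two points---finiteness of $g$ so that $g(W)$ is again a prime divisor, and the Cartier hypothesis on $D_Y$ so that $f^{-1}(D_Y)$ has pure codimension one---is exactly what makes the implication go through.
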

\begin{proof}
    Immediate from the definition.
\end{proof}

\begin{lemma}\label{Blanchard}
    Let $X$ and $Y$ be normal projective varieties and $f:X\to Y$ a contraction. If $X$ is a toric variety, then $Y$ has a toric variety structure such that $f$ is a toric morphism.
\end{lemma}
\begin{proof}
    Follows from \cite[Corollary 1, Page 2]{occhetta2002euler}.
\end{proof}
\begin{lemma} \label{lemma:ex}
Let $f:X\to Y$ be a birational morphism of normal projective varieties and let $E\subset X$ be a prime divisor which is $f$-exceptional. Suppose $Z$ is a proper toric variety with a generically finite surjective map $g:Z\to X$. Then the divisorial part of $g^{-1}(E)_{red}$ is a toric divisor.
\end{lemma}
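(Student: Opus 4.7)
My plan is to reduce the problem to a statement about an exceptional divisor of a birational morphism from $Z$, and then exploit the torus action. Set $\pi := f\circ g\colon Z\to Y$; since $f$ is birational and $g$ is generically finite surjective, $\pi$ is generically finite surjective. Let $W$ be an irreducible codimension-one component of $g^{-1}(E)_{\mathrm{red}}$. Then $g(W)\subseteq E$, so $\pi(W)\subseteq f(E)$; and as $E$ is $f$-exceptional, $\dim f(E)\le\dim Y-2$, giving $\dim \pi(W)\le\dim Z-2$.

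Next I would take the Stein factorization $\pi = h'\circ h$, where $h\colon Z\to V$ is proper surjective with connected fibers and $h'\colon V\to Y$ is finite. Because $\pi$ is generically finite and $h'$ is finite, $h$ has finite generic fibers; combined with connectedness, $h$ is birational. Since $h'$ is finite, $(h')^{-1}(\pi(W))$, and hence $h(W)\subseteq(h')^{-1}(\pi(W))$, has codimension at least two in $V$; so $W$ is an $h$-exceptional prime divisor on $Z$.

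It remains to show that any $h$-exceptional prime divisor on the complete toric variety $Z$ is toric. Let $T\subseteq Z$ denote the big torus, and suppose for contradiction that $W\cap T\ne\emptyset$. Then $W\cap T$ is an open dense subset of $W$, in particular an irreducible codimension-one subvariety of $T$ cut out by a non-monomial Laurent polynomial. The plan is to argue that $V$ inherits a torus action from $T$ under which $h$ is equivariant; this forces $V$ to be a toric variety with big torus $T$ and $h$ to be a toric birational morphism. Under such an $h$, the restriction $h|_T$ is an isomorphism onto the big torus of $V$, so $h(W\cap T)$ would be a codimension-one subvariety of $V$, contradicting $\mathrm{codim}_V h(W)\ge 2$. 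Hence $W\cap T=\emptyset$, i.e., $W$ is contained in the toric boundary of $Z$ and is therefore a toric divisor.

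The main obstacle is rigorously establishing that $V$ is toric and $h$ is equivariant. My plan here is to invoke a Sumihiro-type principle: since $h$ has connected fibers and is birational, the torus action on $Z$ descends uniquely to $V$, being defined tautologically on the open locus where $h$ is an isomorphism and extended by properness using the connectedness of the remaining (exceptional) fibers. Once this descent is in hand, $V$ carries a dense torus orbit (the image of $T$ under $h$), so $V$ is toric; and the exceptional divisors of the toric birational morphism $h$ correspond to rays of the fan of $Z$ not present in the fan of $V$, all of which are toric divisors. This descent step is where the toric hypothesis on $Z$ is essential, and it is the only nontrivial input in the proof.
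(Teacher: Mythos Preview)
Your approach is essentially the same as the paper's: both take the Stein factorization of $f\circ g$ and reduce to the statement that an exceptional divisor of a proper birational morphism out of a complete toric variety is toric. The paper simply asserts this last step, citing the argument in \cite[Theorem 7.2]{totaro2023endomorphisms}; you instead supply a sketch via descent of the torus action.

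Your sketch of the descent is correct, but the principle you need is not really Sumihiro's theorem; it is Blanchard's lemma (see e.g.\ Brion's exposition): for a proper morphism $h\colon Z\to V$ with $h_*\mathcal{O}_Z=\mathcal{O}_V$ and a connected algebraic group $G$ acting on $Z$, the $G$-action descends uniquely to $V$ making $h$ equivariant. This gives exactly what you want: $V$ is toric, $h$ is a toric birational morphism, and hence every $h$-exceptional prime divisor lies in the toric boundary of $Z$. With that reference in place your proof is complete and matches the paper's, with the added benefit of making explicit the one nontrivial fact the paper leaves implicit.
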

\begin{proof}
   The argument is similar to parts of the proof {\cite[Theorem 7.2]{totaro2023endomorphisms}}. Let $Z\xrightarrow{p}Y_1\to Y$ be the Stein factorization of $f\circ g.$ So we have a commutative diagram
    \begin{center}
\begin{tikzcd}
Z \arrow[r, "g"] \arrow[d,"p"]
& X \arrow[d, "f" ] \\
Y_1  \arrow[r,"" ]
&  Y
\end{tikzcd}
\end{center}
 If $D\subseteq g^{-1}(E)_{\rm red} $ is a prime divisor of $Z$, then $D$ is $p$-exceptional, so $D$ is a toric divisor by Lemma \ref{Blanchard}.
\end{proof}

\begin{lemma}\label{lemma:small}
    Let $f:Y\to X$ be a small contraction of normal projective varieties. If $X$ is toric, then so is $Y.$
\end{lemma}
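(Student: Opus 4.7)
The plan is to transfer the toric structure along $f$, exploiting the fact that a small contraction is an isomorphism in codimension one, so that the divisorial data of $X$ (equivalently, the $1$-skeleton of its fan) survives intact on $Y$.

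Write $X=X_\Sigma$ for a fan $\Sigma$ in $N_{\mathbb{R}}$, with big torus $T$ and torus-invariant prime divisors $D_1,\ldots,D_n$ corresponding to the rays of $\Sigma$. Since $f:Y\to X$ is small, there is an open set $U\subseteq X$ with $\mathrm{codim}_X(X\setminus U)\geq 2$ on which $f$ is an isomorphism; in particular, $f$ contracts no divisor, so each $D_i$ has a unique strict transform $\tilde D_i$ on $Y$. The open torus $T=X\setminus\bigcup_i D_i$ then lifts to an open subset $Y_0\subseteq Y$, and the restriction $f|_{Y_0}:Y_0\to T$ is a proper, birational, small morphism onto the smooth target $T$. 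By the classical fact that a proper birational morphism onto a smooth target has purely divisorial exceptional set (Koll\'ar--Mori, Lemma~2.62), $f|_{Y_0}$ must be an isomorphism, so $Y_0\cong T$ is an open copy of the torus inside $Y$.

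It remains to extend the natural $T$-action on $Y_0$ to a regular $T$-action on all of $Y$, which will exhibit $Y$ as a toric variety with torus $T$ and some fan $\Sigma'$ sharing the $1$-skeleton of $\Sigma$. The composition $T\times Y\xrightarrow{\,1\times f\,}T\times X\xrightarrow{\,\mu\,}X$ together with the birational inverse of $f$ yields a rational lift $\tilde\mu:T\times Y\dashrightarrow Y$ defined outside a codimension $\geq 2$ locus. The cleanest route to promote $\tilde\mu$ to a morphism is via the GIT/Cox construction: the projective toric variety $X$ is realized as a GIT quotient of an affine space by a diagonalizable group, and its small birational modifications are precisely the GIT quotients corresponding to the different chambers of the associated GIT fan, each of which is a projective toric variety with the same rays as $\Sigma$ (see Cox--Little--Schenck, \emph{Toric Varieties}, Chapter~15). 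Since $Y$ is a normal projective variety small birational over $X$, it matches one of these GIT quotients and is therefore toric.

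The main obstacle is the rigorous extension of the birational $T$-action to a regular one; the GIT/Cox approach bypasses this by \emph{constructing} $Y$ directly as a toric GIT quotient and matching it to the given $Y$ via its position (nef/movable chamber) in $N^1(Y)_{\mathbb{R}}=N^1(X)_{\mathbb{R}}$. A direct alternative would invoke Sumihiro's equivariant completion theorem to extend the torus action from $Y_0$ to all of $Y$, using normality, projectivity of $Y$, and regularity of $\tilde\mu$ in codimension one.
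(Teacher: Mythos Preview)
Your outline locates the right issue—extending the torus action from the open orbit $Y_0\cong T$ to all of $Y$—but neither of your proposed resolutions closes the gap as stated. The Cox/GIT description in CLS Chapter~15 produces, for each chamber of the secondary fan, a \emph{toric} small modification of $X$; it does not assert that an arbitrary normal projective $Y$ small over $X$ must coincide with one of them. Your sentence ``since $Y$ is a normal projective variety small birational over $X$, it matches one of these GIT quotients'' is precisely the content of the lemma, so invoking it is circular. (Hu--Keel's Mori dream space formalism gives such a matching only for $\mathbb{Q}$-factorial small modifications, which you have not assumed.) Likewise, Sumihiro's theorem concerns linearizing or affinely covering an \emph{existing} regular torus action; it does not promote a rational action regular in codimension one to a morphism.

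The paper closes this gap by a direct $\mathrm{Proj}$ construction that both reconstructs $Y$ from divisorial data on $X$ and carries a torus action by design. Take an ample prime divisor $H$ on $Y$; since $f$ is small, $H=f_*^{-1}f_*H$, so by \cite[Lemma~6.2]{KM} the sheaf of graded algebras $R=\bigoplus_{m\ge 0}\mathcal{O}_X(m\,f_*H)$ is finitely generated and $Y=\mathrm{Proj}_X R$. Now replace $f_*H$ by a linearly equivalent torus-invariant Weil divisor $D$ on the toric variety $X$; then $R\cong\bigoplus_{m\ge 0}\mathcal{O}_X(mD)$ inherits a $T$-action, which passes to $Y=\mathrm{Proj}_X R$ and has a dense orbit because $f$ is birational. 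This is exactly the ``matching'' step your GIT sketch needs: it identifies $Y$ with the toric model determined by the class of $f_*H$, without appealing to any classification of small modifications. Your preliminary observation that $f$ is an isomorphism over $T$ is correct but not needed once one takes this route.
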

\begin{proof}
    Let $H$ be an ample prime divisor in $Y$. Let $R=\oplus_{m\geq 0}\mathcal{O}_X(mf_*H).$ Since $H=f_*^{-1}f_*H$ is ample, by {\cite[Lemma 6.2]{KM}} $R$ is a sheaf of finitely generated $\mathcal{O}_X$-algebras and $Y=\mathrm{\textbf{Proj}}_X R.$ Since $X$ is toric, there is a toric Weil divisor $D$ in $X$ linearly equivalent to $f_*H$. So, $R\cong \oplus_{m\geq 0}\mathcal{O}_X(mD).$ Since $D$ is a toric divisor, the $n$-torus acts on $R$, hence acts on $Y=\mathrm{\textbf{Proj}}_X R$ extending the action on $X.$ As $f$ is birational, this torus action has a dense orbit. So, $Y$ is toric.
\end{proof}
\begin{lemma} \label{toric flop}
    Let $X$, $X'$, and $X^+$ be normal projective varieties. Consider the following flop diagram: 
    
    \begin{center}
\begin{tikzcd}
X  \arrow[rr,dashed, " " ] \arrow[dr, "\psi"'] & & X^{+} \arrow[dl, "\psi^+"] \\
        & X'
\end{tikzcd}
   \end{center}
    If $X$ is toric image, then there is a smooth proper toric variety $Z$ with surjective morphisms to $X$ and $X^+$, commuting with their morphisms to $X'$. In particular, $X^{+}$ is toric image.
\end{lemma}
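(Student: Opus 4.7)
The plan is to construct $Z$ as a toric equivariant resolution of the rational map $W\dashrightarrow X^+$ obtained by composing a given toric cover of $X$ with the flop. By the toric image hypothesis I would choose a finite surjective morphism $f\colon W\to X$ with $W$ a smooth complete toric variety, using equivariant resolution of singularities of toric varieties if necessary. Composing $f$ with $\phi$ yields a rational map $g=\phi\circ f\colon W\dashrightarrow X^+$, and since $\psi$ is small and $f$ is finite, the indeterminacy locus of $g$ is contained in $f^{-1}(\mathrm{Exc}(\psi))$, a closed subset of codimension at least $2$ in $W$.

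Next I would take $\widetilde{\Gamma}$ to be the normalization of the closure of the graph of $g$ in $W\times X^+$, obtaining a projective birational morphism $\pi\colon\widetilde{\Gamma}\to W$ (an isomorphism off the base locus) together with a morphism $\sigma\colon\widetilde{\Gamma}\to X^+$. If $\pi$ were a small contraction, Lemma~\ref{lemma:small} would immediately give that $\widetilde{\Gamma}$ is toric, and an equivariant resolution of singularities of $\widetilde{\Gamma}$ would then furnish the sought smooth complete toric variety $Z$, with $Z\to X$ and $Z\to X^+$ given by $Z\to\widetilde{\Gamma}\to W\to X$ and $Z\to\widetilde{\Gamma}\to X^+$ respectively. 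Commutativity of the compositions $Z\to X\to X'$ and $Z\to X^+\to X'$ is automatic on the dense open set where the flop is an isomorphism, hence everywhere since $X'$ is separated; and the final assertion (that $X'$ is a toric image) follows at once from the surjection $Z\to X\to X'$.

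The main obstacle is arranging for $\pi$ to be small. In general, fibres of $\pi$ over points of $f^{-1}(\mathrm{Exc}(\psi))$ are positive-dimensional, of dimension equal to the fibre dimension of $\phi$ above the flopping locus, so $\pi$ can acquire divisorial exceptional components; in the $3$-fold flop case, $f^{-1}(\mathrm{Exc}(\psi))$ is one-dimensional in $W$ and $\pi^{-1}(f^{-1}(\mathrm{Exc}(\psi)))$ is two-dimensional in $\widetilde{\Gamma}$, so $\pi$ is not small and Lemma~\ref{lemma:small} does not apply directly. To handle this I would perform a preliminary sequence of toric equivariant blow-ups $W'\to W$ along torus-invariant closed subschemes enlarging $f^{-1}(\mathrm{Exc}(\psi))$, and form the graph closure of the induced rational map $W'\dashrightarrow X^+$; the aim is that after finitely many such steps the corresponding projection becomes small over the toric variety $W'$, so that Lemma~\ref{lemma:small} and equivariant resolution of singularities together complete the construction. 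The technical heart of the argument is choosing the torus-invariant centers so as to absorb the non-toric locus $f^{-1}(\mathrm{Exc}(\psi))$ while eliminating all divisorial exceptional components of the graph closure over $W'$.
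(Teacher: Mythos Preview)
Your proposal has a genuine gap precisely at the step you flag as the ``technical heart.'' The indeterminacy locus $f^{-1}(\mathrm{Exc}(\psi))\subset W$ has no reason to be torus-invariant: the map $f\colon W\to X$ is not a toric morphism, so the preimage of the flopping curves is an arbitrary curve in $W$. A sequence of blow-ups of $W$ along torus-invariant centres cannot resolve the rational map $W\dashrightarrow X^+$ along such a curve: after any toric modification $W'\to W$, the strict transform of $f^{-1}(\mathrm{Exc}(\psi))$ is still a non-invariant curve in $W'$ over which the graph closure has one-dimensional fibres, so the projection $\widetilde{\Gamma}\to W'$ remains divisorial and Lemma~\ref{lemma:small} never applies. (A smaller issue: once you replace a singular toric cover by an equivariant resolution, the resulting $W\to X$ is only generically finite, so the codimension-$2$ bound on the indeterminacy locus you invoke is lost as well.)

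The paper avoids the resolution problem entirely by working over the common base $X'$. Starting from any surjection $p\colon Y\to X$ with $Y$ complete toric, it takes the Stein factorisation $Y\to Y'\to X'$ of $\psi\circ p$, so that $Y'\to X'$ is finite. Base-changing the small contraction $\psi^+\colon X^+\to X'$ along this finite map yields a small contraction $Y^+\to Y'$, and now Lemma~\ref{lemma:small} applies directly, with no blow-ups, to give $Y^+$ toric. The required $Z$ is then a toric resolution of (a normalised component of) the fibre product $Y\times_{Y'}Y^+$, which is toric because both factors and both structure maps to $Y'$ are toric. The key idea you are missing is to descend to $X'$ first: finiteness of the Stein factorisation lets you \emph{transport} smallness from $\psi^+$ to $Y^+\to Y'$, rather than trying to manufacture smallness by hand over $W$.
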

\begin{proof}
    Refer to the diagram below for the proof. Suppose $Y$ is a proper toric variety with surjective morphism $p:Y\to X$. Let $Y\xrightarrow{\phi}Y'\xrightarrow{p'}X'$ be the Stein factorization of $\psi\circ p$, so $Y'$ is toric by Lemma \ref{Blanchard}. Let $Y^+$ be the normalization of the irreducible component of $Y'\times_{X'}X^+$ dominating $Y'$ and $X^+$. Let $Z_1$ be the normalization of the irreducible component of $Y\times_{Y'}Y^+$ dominating $Y$ and $Y^+$. The map $Y^+\to Y'$ is a small contraction, so by Lemma \ref{lemma:small}, $Y^+$ is toric. Hence $Y\to Y'$, $Y^+\to Y'$ are toric maps, so $Z_1$ is toric. Now we can take $Z$ to be any toric resolution of singularities of $Z_1.$

\[
\xymatrix@C=3.5em@R=3.5em{
& Z_1 \ar[dl] \ar[dr] & \\
Y \ar[d]_p \ar[dr]_\phi & & Y^{+} \ar[d] \ar[dl] \\
X \ar[dr]_{\psi} & Y' \ar[d]_{p'} & X^{+} \ar[dl]_{\psi^+} \\
& X' &
}
\]
\end{proof}

\begin{lemma}\label{del Pezzo fibre}
    Let $X$ be a smooth projective variety and let $f:X\to Y$ be a fibration of relative Picard rank $1$ with general fibre $F$ a del Pezzo surface. If $X$ is toric image, then $\deg F=6,8$ or $9.$
\end{lemma}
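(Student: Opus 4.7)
The plan is to show the general fibre $F$ is a toric surface via a Stein-factorization reduction plus the known surface case of Conjecture 1.1, and then combine the classification of toric del Pezzo surfaces with a monodromy constraint to pin down the degree.

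First, I would argue that $F$ is a toric image. Using Preliminaries (2), fix a finite surjective morphism $\sigma: Z \to X$ with $Z$ a complete toric variety with torus $T$. Set $g := f \circ \sigma : Z \to Y$ and take its Stein factorization $Z \xrightarrow{p} Y_1 \xrightarrow{q} Y$, where $p$ has connected fibres and $q$ is finite. Because $Z$ is toric and $p$ has connected fibres, the $T$-action on $Z$ descends to an algebraic $T$-action on $Y_1$: the composite $T \times Z \to Z \xrightarrow{p} Y_1$ has connected fibres and so factors through $T \times Y_1 \to Y_1$ by the universal property of the Stein factorization; the image of the dense $T$-orbit in $Z$ is dense in $Y_1$. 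Hence $Y_1$ is toric and $p$ is a toric morphism, so its general fibre is a toric variety (the closure of an orbit of the kernel subtorus). Since $\sigma$ is surjective, $\sigma(g^{-1}(y)) = F$ for $y \in Y$ general, and the irreducibility of $F$ forces at least one irreducible component of $g^{-1}(y)$, which is a general fibre of $p$ and hence toric, to surject onto $F$. Thus $F$ is a toric image.

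Next, I would appeal to the known surface case of Conjecture 1.1 (\cite[Proof of Theorem 4.4.1]{achinger2021global}, \cite[Theorems 6.9, 7.7]{achinger2023global}) to conclude $F$ is a toric surface; the smooth toric del Pezzo surfaces are $\mathbb{P}^2, \mathbb{P}^1 \times \mathbb{P}^1, \mathbb{F}_1, dP_7, dP_6$, of degrees $9,8,8,7,6$ respectively. To exclude $\mathbb{F}_1$ and $dP_7$, I would use a monodromy argument. Let $Y^\circ \subset Y$ be a dense open over which $f$ is smooth, and set $X^\circ := f^{-1}(Y^\circ)$. Since $F$ is del Pezzo, $h^1(F) = h^2(F,\mathcal{O}_F) = 0$, so Deligne's $E_2$-degeneration of the Leray spectral sequence together with the Lefschetz $(1,1)$-theorem gives $\mathrm{NS}(X^\circ)_\mathbb{Q} / f^*\mathrm{NS}(Y^\circ)_\mathbb{Q} \cong \mathrm{NS}(F)_\mathbb{Q}^{\pi_1(Y^\circ)}$. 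The surjection $\mathrm{NS}(X) \twoheadrightarrow \mathrm{NS}(X^\circ)$ shows the rank of $\mathrm{NS}(F)^{\pi_1(Y^\circ)}$ is bounded by the relative Picard rank, which is $1$. However, a direct lattice check shows the group of automorphisms of $\mathrm{NS}(F)$ preserving the intersection form and $K_F$ is trivial for $\mathbb{F}_1$, giving invariant rank $\rho(\mathbb{F}_1) = 2$; and it equals $\mathbb{Z}/2$ (swapping the two $(-1)$-exceptional divisors) for $dP_7$, giving invariant rank $2$. Both contradict the bound, so $\deg F \in \{6,8,9\}$.

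The main obstacle lies in the first step: verifying rigorously that the Stein factor $Y_1$ inherits a toric structure and that $p$ is a toric morphism with toric general fibre. While the descent of the $T$-action from $Z$ to $Y_1$ is classical, confirming that the induced action is algebraic (not merely set-theoretic) and that general fibres really acquire the expected toric structure via the kernel subtorus requires care.
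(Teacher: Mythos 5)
Your overall architecture matches the paper's proof for the first two steps: the paper also reduces to the general fibre by a Stein factorization argument (using that fibres of toric contractions are toric) and then quotes the surface case of Conjecture 1.1 to conclude that $F$ is a smooth toric del Pezzo surface. Where you genuinely diverge is the final step: the paper disposes of the remaining cases by citing Mori's classification \cite{mori1980threefolds} together with the relative Picard rank $1$ hypothesis, whereas you make the exclusion of $\mathbb{F}_1$ and $dP_7$ explicit via Deligne's global invariant cycle theorem and a lattice computation. Your lattice checks are correct: the isometries of $\mathrm{NS}(\mathbb{F}_1)$ fixing $K$ form the trivial group, and for $dP_7$ the monodromy (which must permute the three $(-1)$-curves, whose dual graph is a path) is at most $\mathbb{Z}/2$, so in both cases the invariant rank is $2$, contradicting the bound of $1$ coming from $\rho(X)-\rho(Y)=1$ (note $f^*\mathrm{NS}(Y)_{\mathbb{Q}}$ restricts to zero on $F$). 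This buys a self-contained argument in place of a citation; the only points you should make explicit are that $H^2(X,\mathcal{O}_X)=0$ (automatic here, since a toric image is rationally connected), so that Deligne's surjection $H^2(X,\mathbb{Q})\twoheadrightarrow H^2(F,\mathbb{Q})^{\pi_1(Y^{\circ})}$ really is a surjection from $\mathrm{NS}(X)_{\mathbb{Q}}$, and that $Y^{\circ}$ may be shrunk to lie in the smooth locus of $Y$.

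There is, however, a genuine gap in your first step, and it is exactly the one you flagged. The ``universal property of Stein factorization'' does not apply to the composite $a\colon T\times Z\to Z\xrightarrow{p} Y_1$: to factor $a$ through $\mathrm{id}_T\times p$ you need $p(t\cdot z)$ to depend only on $(t,p(z))$, i.e.\ that each $t\in T$ maps fibres of $p$ into fibres of $p$. This is precisely what has to be proved, and it is not formal, because $g=f\circ\sigma$ is not equivariant in any sense (there is no torus action on $X$ or $Y$). Connectedness of the fibres of the composite is not the relevant hypothesis: $\mathrm{pr}_2\colon \mathbb{P}^1\times\mathbb{P}^1\to\mathbb{P}^1$ has connected fibres but does not factor through $\mathrm{pr}_1$. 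The standard repair is a linearization argument rather than a rigidity argument: let $A$ be ample on $Y_1$ (projective, being finite over $Y$) and $L=p^*A$; then $L$ is semiample and, since $p_*\mathcal{O}_Z=\mathcal{O}_{Y_1}$, one has $Y_1=\mathrm{Proj}\bigoplus_{m\geq 0}H^0(Z,mL)$. Every line bundle on a complete toric variety admits a $T$-linearization, so $T$ acts on this section ring, hence on $Y_1$, and $p$ is equivariant; the image of the open orbit of $Z$ is then a single dense orbit, so $Y_1$ is toric and the general fibre of $p$ is toric under the kernel subtorus. (Equivalently: every Mori contraction of a projective toric variety is a toric morphism.) With this substitution, your proof is complete and correct; this descent step is also implicitly the content of the paper's one-line ``Stein factorization argument,'' so the paper's own proof is no more detailed here than yours, but your attempted justification of it as written would fail.
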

\begin{proof}
    Since fibres of any toric contraction are toric by Lemma \ref{Blanchard}, a Stein factorization argument shows $F$ is a toric image. As $F$ is a smooth surface, $F$ is toric by {\cite[Proof of Theorem 4.4.1]{achinger2021global}}. Since $F$ is also a del Pezzo surface, and $f$ has relative Picard rank $1$, by \cite{mori1980threefolds}, we have $\deg F=6,8$, or $9.$ 
\end{proof}

\begin{lemma}\label{future}
    If $D$ is a prime divisor in $\mathbb{P}^3$ such that $(\mathbb{P}^3, D)$ is toric image, then $D$ is linear.
\end{lemma}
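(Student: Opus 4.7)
The plan proceeds in three stages: (i) show $D$ must itself be a toric surface, (ii) bound $\deg D$ using adjunction and bigness of the toric anticanonical, and (iii) eliminate the remaining low-degree cases.

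For stage (i): since $K_{\mathbb{P}^3}+D$ is Cartier, Lemma \ref{toric image} applies and yields that $(\mathbb{P}^3, D)$ is log canonical, so in particular $D$ is normal with only lc singularities. Next, let $f:Z\to\mathbb{P}^3$ be a finite surjective morphism from a proper toric variety with each irreducible component of $f^{-1}(D)_{\rm red}$ a toric divisor, and pick such a component $E$. Then $E$ is a complete normal toric surface and $f|_E:E\to D$ is finite surjective, exhibiting the normal surface $D$ as a toric image. By the surface case of Conjecture 1.1 used in Lemma \ref{del Pezzo fibre} (proved for smooth surfaces in \cite{achinger2021global}, extended to the normal case via passage to the minimal resolution and lifting the cover), $D$ is itself a toric surface.

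For stage (ii): setting $d=\deg D$, adjunction on $\mathbb{P}^3$ gives $K_D=(d-4)H|_D$, so $-K_D=(4-d)H|_D$, where $H|_D$ is ample on $D$. Since the anticanonical class of any projective toric variety is big, bigness of $(4-d)H|_D$ forces $d\leq 3$; the cases $d=4$ ($-K_D=0$) and $d\geq 5$ ($-K_D$ anti-ample) are ruled out immediately.

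For stage (iii), I need to eliminate $d=2$ and $d=3$. The candidate toric surfaces are $\mathbb{P}^1\times\mathbb{P}^1$ and the quadric cone $\mathbb{P}(1,1,2)$ for $d=2$, and certain singular toric cubic surfaces with cyclic-quotient singularities, such as $\{xyz=w^3\}$, for $d=3$. In each case I would derive a contradiction from the structure of the cover $f$, using that $\pi_1^{\rm alg}(\mathbb{P}^3\setminus D)$ is essentially cyclic of order $d$: any toric cover of $\mathbb{P}^3$ branched only over $D$ must then factor through the cyclic $d$-fold cover, which is a known non-toric Fano threefold (the smooth quadric threefold for $d=2$; the cubic threefold for $d=3$) and hence not dominated by any proper toric variety. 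The main obstacle, where I expect the most care to be required, is controlling any extra branching of $f$ over toric divisors of $Z$ disjoint from $f^{-1}(D)$; a careful ramification-theoretic analysis of the toric boundary of $Z$ and its images in $\mathbb{P}^3$ should reduce the situation to the branched-only-over-$D$ setting and complete the argument.
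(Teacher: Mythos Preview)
The paper's proof is a one-line citation to \cite{ksw}, so there is no in-paper argument to compare against; I can only assess whether your outline stands on its own.

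It does not. Stage~(iii) contains a genuine error beyond the gap you already flag. Your assertion that the cyclic $d$-fold cover of $\mathbb{P}^3$ branched along $D$ is ``a known non-toric Fano threefold'' holds only when $D$ is smooth. For $D$ the quadric cone---one of your own listed $d=2$ candidates---the double cover of $\mathbb{P}^3$ branched along $D$ is the rank-$4$ quadric in $\mathbb{P}^4$, i.e.\ the projective cone over $\mathbb{P}^1\times\mathbb{P}^1$, which \emph{is} toric. For the singular toric cubic $\{x_0x_1x_2=x_3^3\}$ the triple cover is likewise a singular (not smooth) cubic threefold, so the Occhetta--Wi\'sniewski theorem for smooth Picard-rank-$1$ targets does not apply. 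Thus even in the idealized ``branched only over $D$'' scenario, your argument cannot exclude precisely the singular toric $D$ that survive Stages~(i)--(ii). The obstacle you do acknowledge---that $f:Z\to\mathbb{P}^3$ may ramify outside $D$---is also real and has no evident fix: the branch locus of a finite surjection from a toric variety to $\mathbb{P}^3$ is only constrained to be the image of the toric boundary, not to lie in $D$.

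Stage~(i) is also not airtight. The surface case in \cite{achinger2021global} is for smooth surfaces, and your proposed ``lift the cover to the minimal resolution'' step is not automatic: the normalized fibre product $\tilde D\times_D E$ with $E$ toric need not be toric, and a toric resolution of $E$ need not factor through $\tilde D$. This may be repairable, but it needs an argument. In sum, your outline reaches the easy bound $\deg D\le 3$ but does not close the remaining cases; the paper sidesteps all of this by invoking \cite{ksw} directly.
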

\begin{proof}
    Follows from \cite[Theorem B]{ksw}.
\end{proof}
\begin{lemma}\label{totaro bott}
    Let $X$ be a normal projective variety and $D$ a reduced Weil divisor on $X$ such that $(X, D)$ is a toric image. Then $(X, D)$ satisfies log Bott vanishing.
\end{lemma}
\begin{proof}
    Follows from \cite[Theorem 5.1]{totaro2023endomorphisms}.
\end{proof}
  Now we are ready to prove Theorem \ref{A}.

  \textit{Proof of Theorem \ref{A}:} Assume $X$ is not Fano, as otherwise we are done by \cite[Theorem 7.2]{totaro2023endomorphisms}. Since projective toric varieties are log Fano, by {\cite[Corollary 5.2]{fujino2012canonical}} $X$ is log Fano. Hence $-K_X$ is big. As we assumed that $-K_X$ is nef, we see that $X$ is weak Fano. As $X$ is not Fano, one of the boundary rays of $\overline{NE}(X)$ is $K_X$-negative, the other one is $K_X$-trivial. Let $\phi:X\to Y$ be the contraction of the $K_X$ negative ray, $\psi:X\to X'$ the contraction of the $K_X$-trivial ray,

  \textbf{Step 1:} We show that one of the following holds:
     \begin{enumerate}
         \item $X$ has a fibration,
         \item $\psi$ is small and $X^+$ has a fibration, where $X^+\rightarrow X'$ is the flop of $\psi$.
     \end{enumerate}

     Here \textit{fibration} means a contraction to a lower dimensional subvariety which is not a point.
     Suppose neither $(1)$ nor $(2)$ holds. We want to get a contradiction. Since $(1)$ does not hold, $\phi$ is birational. By \cite{kollar1991extremal}, $\phi$ is a divisorial contraction, say $E$ is the exceptional divisor.
     
     We define a prime divisor $D\neq E$ in $X$ as follows. If $\psi$ is divisorial, let $D$ be the exceptional divisor of $\psi$. If $\psi$ is small, since $(2)$ does not hold, the $K_{X^+}$-negative contraction is a divisorial contraction, let $D^+$ be the exceptional divisor. Let $D$ be the strict transform of $D^+$ in $X$.
     
     \textbf{Claim 1:} $(X,E+D)$ is toric image.
     
     \begin{proof}
         If $\psi$ is not small, Lemma \ref{lemma:ex} shows $(X,E+D)$ is toric image. Now assume $\psi$ is small.  By Lemma \ref{toric flop}, there is a smooth proper toric variety $Z$ with a commutative diagram
         \begin{center}
\begin{tikzcd}
 & Z \arrow[dl,"r"]   \arrow[dr,"s"]    \\
X  \arrow[rr, dashed, "(\psi^+)^{-1}\circ \psi"] && X^+.
\end{tikzcd}
\end{center}

For a reduced subscheme $W$ of $Z$, let $[W]$ denote the divisorial part of $W$, regarded as a Weil divisor. By Lemma \ref{lemma:ex}, the divisorial parts of $[r^{-1}(E)_{\rm red}]$, $[s^{-1}(D^+)_{\rm red}]$ are toric divisors in $Z$. Since $(\psi^+)^{-1}\circ \psi$ is an isomorphism in codimension $1$, the commutative diagram shows that $[r^{-1}(D)_{\rm red}]-[s^{-1}(D^+)_{\rm red}]$ is an $r$-exceptional divisor, hence a toric divisor. So, $[r^{-1}(D)_{\rm red}]$ is a toric divisor. So, $(X, D+E)$ is a toric image.
     \end{proof}
     
     \textbf{Claim 2:} dim $\phi(E)=0$, $(E,N_{E/X})=(\mathbb{P}^2,\mathcal{O}(-2))$ or $(Q,\mathcal{O}(-1))$, where $Q$ is an irreducible quadric in $\mathbb{P}^3$. Here $N_{E/X}:=\mathcal{O}_X(E)|_E$, which is same as the normal bundle of $E$ in $X$ if $E$ is smooth.
     \begin{proof}
         Suppose dim $\phi(E)=1$. By \cite{kollar1991extremal}, $Y$ is smooth and $\phi$ is the blow up of a smooth curve $C$ in $Y$. $Y$ is smooth toric image threefold of Picard rank $1$, so $Y=\mathbb{P}^3$ by {\cite[Theorem 2]{occhetta2002euler}}. $C$ is a toric image, so $C\cong \mathbb{P}^1 $. We have $(\phi^{-1}\phi(D))_{\rm{red}}\leq D+E$, so by Lemma \ref{push} and Claim 1, $(\mathbb{P}^3, \phi(D))$ is toric image. By Lemma \ref{future}, $\phi(D)=H$ is a hyperplane in $\mathbb{P}^3$. As $D$ and $E$ generate the pseudoeffective cone of $X$, and each fibre of $\phi$ over $C$ intersects $E$ negatively, this fibre must intersect $D$. Hence, $C\subset H$. So, either $C$ is a line, or a degree $2$ planar curve, as $C\cong \mathbb{P}^1 $. In both cases, $X=$ Bl$_C \mathbb{P}^3$ is Fano, a contradiction. So, dim $\phi(E)=0.$

         Say $p=\phi(E).$ So $X=$ Bl$_p(Y)$ by \cite{kollar1991extremal}. If $Y$ is smooth, then $Y=\mathbb{P}^3$ by {\cite[Theorem 2]{occhetta2002euler}}, and $X=$ Bl$_p \mathbb{P}^3$ is Fano, a contradiction. So, $Y$ is singular. By \cite{kollar1991extremal}, $(E,N_{E/X})=(\mathbb{P}^2,\mathcal{O}(-2))$ or $(Q,\mathcal{O}(-1))$, where $Q$ is an irreducible quadric in $\mathbb{P}^3$.
     \end{proof}

     \textbf{Claim 3:} $\psi$ is small birational.
     \begin{proof}
         Suppose not. By {\cite[Corollary 1.5]{jahnke2011threefolds}}, $-K_X$ is base point free. Let $B=\psi(D)$, a smooth curve in $X'.$ As $B$ is toric image, we have $B\cong\mathbb{P}^1.$ Together with Claim 2, it shows $X$ is as in {\cite[No. 1, Table A.5]{jahnke2004threefolds}}. By Corollary \ref{bott}$(6)$, $X$ does not have Bott vanishing, a contradiction to Lemma \ref{totaro bott}.
     \end{proof}

     Let $X^+\xrightarrow{\psi^+}X'$ be the flop of $\psi$. $X^{+}$ is a smooth weak Fano threefold, and by Lemma \ref{toric flop} $X^{+}$ is toric image. By our assumption, $X^{+}$ does not have a fibration. By Claim 2 applied to $X^+$, we see that the $K_{X^+}$-negative extremal contraction $\psi^+:X^+\to Y^+$ is divisorial and contracts $D^+$ to a point, where $D^+$ is the strict transform of $D$. Neither $X$ nor $X^+$ has a fibration, so by {\cite[Proposition 2.5]{jahnke2011threefolds}}, $-K_X, -K_{X^+}$ are base point free. Let $r_X$ be the index of $X$, the largest positive integer dividing $-K_X$ in $\mathrm{Pic }X.$ If $r_X\geq 3$, by {\cite[Proposition 2.12]{jahnke2011threefolds}}, $X$ has a fibration, a contradiction. If $r_X=2$, by {\cite[Theorem 2.13]{jahnke2011threefolds}}, $\phi$ is blow-up of a smooth threefold at a point. This contradicts Claim 2. So, $r_X=1$. By \cite{cutrone2013towards}, $X$ is as in {\cite[Table 8 or 9]{cutrone2013towards}}. By Corollary \ref{bott}, $X$ does not have Bott vanishing, a contradiction to Lemma \ref{totaro bott}.
 
\textbf{Step 2:} We complete the proof.

Replacing $X$ by $X^+$ if necessary, we may assume that $X$ has a fibration. So, $X$ has either a del Pezzo fibration over $\mathbb{P}^1$ or a conic bundle over $\mathbb{P}^2$. 

If $-K_X$ is not spanned, by {\cite[Corollary 1.5]{jahnke2004threefolds}}. So by {\cite[Proposition 2.5]{jahnke2011threefolds}}, $X$ has a degree $1$ del Pezzo fibration, contradicting Lemma \ref{del Pezzo fibre}. So, $-K_X$ is spanned.

If $\phi$ is a del Pezzo fibration, by Corollary \ref{bott} and Lemma \ref{del Pezzo fibre}, general fibre of $\phi$ is $\mathbb{P}^2$. Looking at {\cite[Table 7.1]{jahnke2011threefolds}} and {\cite[Table A.3]{jahnke2004threefolds}}, we see that $X$ is either $\mathbb{P}_{\mathbb{P}^1}(\mathcal{O}_{\mathbb{P}^1}\oplus\mathcal{O}_{\mathbb{P}^1}(1)^2)$ or $\mathbb{P}_{\mathbb{P}^1}(\mathcal{O}_{\mathbb{P}^1}^2\oplus\mathcal{O}_{\mathbb{P}^1}(2))$, so we are done.

Now suppose $\phi$ is a conic bundle. By Corollary \ref{bott}, $X$ is either $\mathbb{P}_{\mathbb{P}^2}(\mathcal{O}_{\mathbb{P}^2}\oplus \mathcal{O}_{\mathbb{P}^2}(3))$ or as in {\cite[No. 1, Table 7.7]{jahnke2011threefolds}}. To rule out the latter, let $Y^+$ be the target of the $K_{X^+}$-negative contraction of $X^+$. Since $X$ is as in {\cite[No. 1, Table 7.7]{jahnke2011threefolds}}, $Y^+$ is smooth threefold of Picard rank $1$, and $-K_{Y^+}^3=54$. Also, by Lemma \ref{toric flop} $X^{+}$ is a toric image, hence so is $Y^{+}$. So by \cite{occhetta2002euler}, $Y^+\cong \mathbb{P}^3$. But then we have $-K_{Y^+}^3=64$, a contradiction.
\begin{remark}
    Note that as a corollary, we get the classification of toric weak Fano $3$-folds of Picard rank $2$ which are not Fano, without using combinatorics of fans.
\end{remark}

\section{Int-amplified endomorphism}
In this section we shall prove Theorem \ref{B}.

Now we are ready to prove Theorem \ref{B}. First we prove the following lemmas.
\begin{lemma}\label{ex1}
    Let $\phi:X\to Y$ be a birational contraction of normal varieties, with $X$ log Fano. Suppose $X$ has an int-amplified endomorphism $f$. Then after replacing $f$ by some power of $f$, the induced dominant rational map $g:Y\to Y$ is an int-amplified endomorphism.
\end{lemma}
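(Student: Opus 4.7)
The plan is to combine the rational polyhedrality of the Mori cone of a log Fano variety with the eigenvalue characterization of int-amplified endomorphisms (Meng's criterion). Since $X$ is log Fano, the Cone Theorem gives that $\overline{NE}(X)$ is rational polyhedral with only finitely many extremal rays. Because $f$ is finite surjective, $f_*$ is an automorphism of $N_1(X)_{\mathbb{R}}$ that preserves $\overline{NE}(X)$, and hence permutes its extremal rays. Replacing $f$ by a suitable power, I may assume $f_*$ fixes every extremal ray; in particular it fixes the ray $R$ contracted by $\phi$.

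Next I would deduce that $f$ carries every fibre of $\phi$ into a fibre of $\phi$. If $C\subset X$ is an irreducible curve contracted by $\phi$, then $[C]\in R$, so $f_*[C]\in R$ (possibly zero, if $f$ is not finite on $C$). In either case $\phi(f(C))$ is a point: if $f_*[C]=0$ then $f(C)$ is itself a point, and otherwise $f(C)$ is a curve whose numerical class lies in $R$ and is therefore contracted by $\phi$. Thus $\phi\circ f$ collapses every (connected) fibre of $\phi$ to a point, and since $\phi$ is a contraction (so $\phi_*\mathcal{O}_X=\mathcal{O}_Y$), the rigidity lemma yields a morphism $g:Y\to Y$ with $g\circ\phi=\phi\circ f$. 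This $g$ agrees with the rational map induced by $f$ on the isomorphism locus of $\phi$, and is surjective because $\phi$ and $f$ are.

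Finally I would verify that $g$ is int-amplified by the criterion that an endomorphism of a normal projective variety is int-amplified if and only if every eigenvalue of its pullback on $N^1(-)_{\mathbb{R}}$ has modulus strictly greater than $1$. The relation $g\circ\phi=\phi\circ f$ gives $\phi^*\circ g^*=f^*\circ\phi^*$ on Néron--Severi spaces, and $\phi^*:N^1(Y)_{\mathbb{R}}\to N^1(X)_{\mathbb{R}}$ is injective because $\phi$ is surjective. Hence $\phi^*N^1(Y)_{\mathbb{R}}$ is an $f^*$-stable subspace on which $f^*$ is conjugate to $g^*$, so every eigenvalue of $g^*$ is an eigenvalue of $f^*$ and thus of modulus $>1$. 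By the criterion, $g$ is int-amplified (and in particular finite surjective).

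The main obstacle is the first step: extracting from the log Fano assumption enough finiteness to force a power of $f$ to preserve the given contraction $\phi$. Once this is in place, the descent of $f$ to $g$ via the rigidity lemma and the verification of int-amplifiedness of $g$ via the eigenvalue criterion are essentially formal.
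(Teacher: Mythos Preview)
Your argument is correct and is the standard one; the paper gives no independent proof but simply refers to the proof of \cite[Lemma~6.2]{totaro2023endomorphisms}, which proceeds along exactly these lines (polyhedrality of $\overline{NE}(X)$ for $X$ log Fano lets a power of $f$ descend along $\phi$ via rigidity, and Meng's eigenvalue criterion then shows the descended map is int-amplified). One cosmetic point: as stated, $\phi$ may contract a face of $\overline{NE}(X)$ rather than a single extremal ray, but once a power of $f_*$ fixes every extremal ray of the polyhedral cone it fixes every face, so your argument goes through unchanged.
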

\begin{proof}
    This follows from the proof of {\cite[Lemma 6.2]{totaro2023endomorphisms}}.
\end{proof}
\begin{lemma}\label{pol flop}
    Let $X, X^+$ be smooth projective threefolds, and let
    \begin{center}
\begin{tikzcd}
X  \arrow[rr,dashed, " " ] \arrow[dr, ""'] & & X^{+} \arrow[dl, ""] \\
        & X'
\end{tikzcd}
   \end{center} be a flop diagram. Let $D\subset X$ be a prime divisor and $f:X\to X$ an int-amplified endomorphism such that set-theoretically $f^{-1}D=D$ and $f^{-1}C=C$ for all flopping curves $C.$ Then the induced dominant rational map $f^+:X^{+}\dashrightarrow X^{+}$ is an int-amplified endomorphism, and $(f^+)^{-1}D^+=D^+$ set-theoretically, where $D^+$ is the strict transform of $D$ in $X^+$. 
\end{lemma}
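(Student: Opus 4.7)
The argument has three steps: descend $f$ through the flopping contraction $\psi$, lift the resulting map back through $\psi^+$, and verify the two stated properties of the resulting $f^+$.

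First, since $f$ is finite and $f^{-1}C = C$ for every flopping curve $C$, each such $C$ is mapped to itself set-theoretically by $f$. Hence the composite $\psi \circ f : X \to X'$ contracts every fibre of $\psi$ to a point, so by the rigidity lemma there is a unique morphism $g : X' \to X'$ with $g \circ \psi = \psi \circ f$. Since $f$ is finite surjective and $\psi$ is birational with no contracted divisors, $g$ is finite surjective as well.

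Second, we upgrade the rational map $\sigma \circ f \circ \sigma^{-1}$ (with $\sigma : X \dashrightarrow X^+$ the flop, defined on the flop-isomorphism locus) to a morphism $f^+ : X^+ \to X^+$. Fix a $\psi^+$-ample line bundle $L^+$ on $X^+$, and let $Y$ be the normalization of the dominant irreducible component of $X^+ \times_{X', g} X'$. The two projections yield a finite morphism $p_1 : Y \to X^+$ of degree $\deg g$ and a small birational morphism $p_2 : Y \to X'$ with $\psi^+ \circ p_1 = g \circ p_2$. Since $p_1$ is finite, $(p_1)^* L^+$ is $p_2$-ample, so $Y$ is a small birational modification of $X'$ polarized on the same side of the flopping wall as $X^+$. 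By uniqueness of the flop of $X$ over $X'$---for smooth $3$-fold flops of relative Picard rank $1$ there are exactly two small $\mathbb{Q}$-factorial modifications, namely $X$ and $X^+$, distinguished by the sign of the relative polarisation on the flopping curves---we get $Y \cong X^+$ canonically over $X'$. Under this identification $p_1$ becomes a morphism $f^+ : X^+ \to X^+$ satisfying $\psi^+ \circ f^+ = g \circ \psi^+$, and it agrees with $\sigma \circ f \circ \sigma^{-1}$ wherever the flop is an isomorphism.

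Finally, the flop is an isomorphism in codimension one, so it induces a canonical isomorphism $N^1(X)_{\mathbb{R}} \cong N^1(X^+)_{\mathbb{R}}$ under which $f^*$ and $(f^+)^*$ correspond. By the eigenvalue criterion for int-amplified endomorphisms, $f$ being int-amplified means all eigenvalues of $f^*$ have absolute value greater than $1$; the same then holds for $(f^+)^*$, so $f^+$ is int-amplified. For the divisor claim, both $(f^+)^{-1}(D^+)$ and $D^+$ are Weil divisors on $X^+$ whose restrictions to the flop-isomorphism locus $U^+ \subset X^+$ agree, because $f^{-1}D = D$ and $f^+$ transports to $f$ under $\sigma$ on $U^+$; since $X^+ \setminus U^+$ has codimension at least $2$, the two divisors coincide on all of $X^+$.

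The main obstacle is the identification $Y \cong X^+$ in the second paragraph, i.e.\ showing that base change of $\psi^+$ by the finite morphism $g$ reproduces $X^+$ rather than its opposite flop $X$. This rests on the uniqueness of a small $\mathbb{Q}$-factorial modification in a prescribed chamber of the movable cone over $X'$, combined with the fact that pullback along a finite morphism preserves signs of intersection numbers with flopping curves. Once this identification is in hand, Steps 1 and 3 are essentially formal.
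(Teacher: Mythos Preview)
Your proof is correct and follows the same overall strategy as the paper's, which simply cites three external results: descend $f$ to $g$ on $X'$, invoke \cite[Lemma 3.6]{zhang2010polarized} to show $f^+$ is a morphism, then invoke \cite[Theorem 3.3]{meng2020building} for int-amplified. You have essentially unpacked these citations into explicit arguments, which is fine.

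Two minor remarks on the details. First, in your Step~2 you invoke ``uniqueness of a small $\mathbb{Q}$-factorial modification in a prescribed chamber'' to identify $Y$ with $X^+$, but you have not verified that the normalized fibre product $Y$ is $\mathbb{Q}$-factorial. The cleaner route, avoiding this issue, is to observe that both $Y$ and $X^+$ arise as $\mathrm{Proj}_{X'}$ of the section ring of a relatively ample Weil divisor class on $X'$, and that the two classes lie in the same relative ample ray (your sign-of-intersection argument shows exactly this, once one notes that the hypothesis $f^{-1}C=C$ forces $g^{-1}$ of the flopping locus to equal the flopping locus set-theoretically). Second, your eigenvalue argument in Step~3 is a slight variant of the paper's route: the paper first checks $g$ is int-amplified and then appeals to a descent/ascent result of Meng, whereas you transport $f^*$ to $(f^+)^*$ directly via the codimension-one isomorphism $\sigma_*:N^1(X)_{\mathbb{R}}\xrightarrow{\sim}N^1(X^+)_{\mathbb{R}}$. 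Both are valid; yours is arguably more direct here since it bypasses any statement about $g$.
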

\begin{proof}
    The proof is essentially the same as the proof of {\cite[Lemma 6.5]{meng2018building}}. The map $f$ induces an int-amplified endomorphism $f':X'\to X'$ such that $(f')^{-1}D'=D'$, where $D'$ is the image of $D$ in $X'$. By the same proof as in {\cite[Lemma 3.6]{zhang2010polarized}}, $f^+$ is a morphism, hence by {\cite[Theorem 3.3]{meng2020building}}, an int-amplified morphism. The statement  $(f^+)^{-1}D^+=D^+$ is clear.
\end{proof}
\begin{lemma}\label{del Pezzo fibre1}
    Let $X$ be a smooth projective variety and let $\phi:X\to Y$ be a fibration of relative Picard rank $1$ with general fibre $F$ a del Pezzo surface. If $X$ has an int-amplified endomorphism, then $\deg F=6,8$ or $9.$
\end{lemma}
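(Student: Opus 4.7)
My plan is to model the proof on that of Lemma \ref{del Pezzo fibre}, substituting Nakayama's theorem \cite{nakayama2008complex} (a smooth projective surface with an int-amplified endomorphism is toric) for the analogous toric image statement of Achinger et al.\ used there. So the target is to produce an int-amplified endomorphism on a general fibre $F$ of $\phi$, and then quote the two-dimensional classification.

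First I would descend $f$ along $\phi$. Since $f^*$ permutes the finitely many extremal rays of $\overline{NE}(X)$ (as in \cite{meng2018building}), after replacing $f$ by a positive power, $f$ preserves the ray contracted by $\phi$. Then $f$ descends to a surjective endomorphism $g:Y\to Y$ with $\phi\circ f=g\circ \phi$, and the descent machinery of {\cite[Theorem 3.3]{meng2020building}} (or a direct calculation using an ample class of the form $\phi^*A+\varepsilon H$) shows that $g$ is again int-amplified. In particular $g$ is finite surjective, so its periodic points are Zariski dense in $Y$. I would then pick a $g$-periodic point $y\in Y$ lying in the open locus where fibres of $\phi$ are smooth del Pezzo surfaces isomorphic to the generic fibre, and replace $f$ by a further power so that $g(y)=y$. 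The map $f$ then restricts to an endomorphism $h:F\to F$ with $F=\phi^{-1}(y)$.

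Now I would verify that $h$ is int-amplified: if $H$ is an ample divisor on $X$ with $f^*H-H$ ample, then $H|_F$ is ample on the smooth fibre $F$ and $h^*(H|_F)-H|_F=(f^*H-H)|_F$ is ample, so $h$ is int-amplified. By \cite{nakayama2008complex}, $F$ is therefore toric. Since $F$ is also a del Pezzo surface arising as a general fibre of a contraction of relative Picard rank one, Mori's classification \cite{mori1980threefolds} of smooth toric del Pezzo surfaces that can appear in this way forces $\deg F\in\{6,8,9\}$, exactly as in the last sentence of the proof of Lemma \ref{del Pezzo fibre}.

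The main obstacle is the descent step: checking that some power of $f$ fixes the extremal ray contracted by $\phi$ and induces an int-amplified morphism on $Y$, and that one can pick a $g$-periodic point over which the fibre is smooth and of the general form. Once these technicalities are verified via the existing structure theory of int-amplified endomorphisms, the surface classification closes the argument with no further input.
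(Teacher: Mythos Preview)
Your argument is correct, but the paper takes a shorter route that avoids most of the machinery you invoke. After replacing $f$ by a power so that it descends to some endomorphism $\bar f$ of $Y$ (the same first step as yours), the paper does \emph{not} try to find a periodic point. Instead, for a \emph{general} $y\in Y$ it considers the induced surjection $f\colon \phi^{-1}(y)\to \phi^{-1}(\bar f(y))$ between two general (hence smooth del Pezzo) fibres; this map is not an isomorphism because $f$ is int-amplified. The paper then appeals directly to the argument of \cite[Proposition~4]{beauville2001endomorphisms}, which already forces $\deg F\geq 6$ for a del Pezzo surface admitting such a non-isomorphic surjective map. The final step, using \cite{mori1980threefolds} together with the relative Picard rank $1$ hypothesis to exclude degree $7$, is identical to yours.

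What your approach buys is a clean black box: once you manufacture an int-amplified self-map of a single fibre, Nakayama's surface classification \cite{nakayama2008complex} hands you ``$F$ is toric'' in one stroke. The price is the additional input you flagged yourself --- int-amplified descent along $\phi$, Zariski density of periodic points for the induced map on $Y$, and passing to a further iterate to fix a good point --- none of which the paper needs. The paper's route trades Nakayama for Beauville's more elementary Hodge-theoretic argument and works with a map between two distinct general fibres, sidestepping the periodic-point step entirely.
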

\begin{proof}
    Let $f$ be an int-amplified endomorphism of $X$. By replacing $f$ by a power of $f$, we may assume that $f$ is over an endomorphism $\bar{f}$ of $Y$. For a general point $y_1\in Y$ and $y_2=\bar{f}(y_1)$, we have a surjective endomorphism $f|_{\phi^{-1}(y_1)}: \phi^{-1}(y_1)\to \phi^{-1}(y_2)$, which is not an isomorphism as $f$ is int-amplified. By the same proof as in {\cite[Proposition 4]{beauville2001endomorphisms}}, we have $\deg F\geq 6$. Since $\phi$ has relative Picard rank $1$, by \cite{mori1980threefolds}, we have $\deg F=6,8$ or $9.$ 
\end{proof}
\begin{lemma}\label{totaro bott endo}
    Let $X$ be a normal projective variety and $D$ a reduced Weil divisor on $X$ such that $(X, D)$ has an int-amplified endomorphism. Then $(X, D)$ satisfies log Bott vanishing.
\end{lemma}
\begin{proof}
    Follows from \cite[Theorem 3.1]{totaro2023endomorphisms}.
\end{proof}
  \textit{Proof of Theorem \ref{B}:}  It is very similar to the proof of Theorem \ref{A}. Assume $X$ is not Fano, as otherwise we are done by \cite[Theorem 6.1]{totaro2023endomorphisms}. Since $X$ is weak Fano, it is rationally connected. So, $K_X$ cannot be nef. Let $\phi: X\to Y$ be the contraction of the $K_X$-negative ray of $\overline{NE}(X)$, $\psi: X\to X'$ be the contraction of the $K_X$-trivial ray of $\overline{NE}(X)$. Hence $\psi$ is either divisorial or small birational contraction. If $\psi$ is small, let $\psi^+:X^+\to X'$ be the flop of $\psi$. By {\cite[Proposition 2.2]{jahnke2011threefolds}}, $X^{+}$ is a smooth weak Fano threefold, and by Lemma \ref{pol flop} $X^{+}$ has int-amplified endomorphism.

  \textbf{Step 1:} We show that one of the following holds:
     \begin{enumerate}
         \item $X$ has a fibration,
         \item $\psi$ is small and $X^+$ has a fibration, where $X^+\rightarrow X'$ is the flop of $\psi$.
     \end{enumerate}
     Suppose neither $(1)$ nor $(2)$ holds. We want to get a contradiction. Since $(1)$ does not hold, $\phi$ is birational. By \cite{kollar1991extremal}, $\phi$ is a divisorial contraction, say $E$ is the exceptional divisor. We define a prime divisor $D\neq E$ in $X$ in the same way as in Step 1 of the proof of Theorem \ref{A}.

     \textbf{Claim 1:} $(X,E+D)$ has int-amplified endomorphism.
     
     \begin{proof}

     Let $f$ be an int-amplified endomorphism of $X$. Replacing $f$ by a power of $f$, by Lemmas \ref{pol flop} and \ref{ex1} we have set-theoretically $f^{-1}(D)=D, f^{-1}(E)=E.$
     \end{proof}

      \textbf{Claim 2:} dim $\phi(E)=0$, $(E,N_{E/X})=(\mathbb{P}^2,\mathcal{O}(-2))$ or $(Q,\mathcal{O}(-1))$, where $Q$ is an irreducible quadric in $\mathbb{P}^3$. Here $N_{E/X}:=\mathcal{O}_X(E)|_E$, which is same as the normal bundle of $E$ in $X$ if $E$ is smooth.
     \begin{proof}
       It is almost the same as the proof of Claim 2 in the proof of Theorem \ref{A}.  Suppose dim $\phi(E)=1$. By \cite{kollar1991extremal}, $Y$ is smooth and $\phi$ is the blow up of a smooth curve $C$ in $Y$. $Y$ is smooth Fano threefold of Picard rank $1$ with an int-amplified endomorphism by Lemma \ref{ex1}. So, $Y=\mathbb{P}^3$ by {\cite[Theorem A]{totaro2023bott}}. We have $\phi^{-1}\phi(D)\subset D+E$, so by Claim 1, $(\mathbb{P}^3, \phi(D))$ has int-amplified endomorphism. By {\cite[Corollary 1.2]{horing2017totally}}, $\phi(D)=H$ is a hyperplane in $\mathbb{P}^3$. As $D$ and $E$ generate the pseudoeffective cone of $X$, and each fibre of $\phi$ over $C$ intersects $E$ negatively, this fibre must intersect $D$. Hence, $C\subset H$. So, either $C$ is a line, or a degree $2$ planar curve, or a planar elliptic curve, as $C$ has a non-isomorphic endomorphism. In each case, $X=$ Bl$_C \mathbb{P}^3$ is Fano, a contradiction. So, dim $\phi(E)=0.$

         Say $p=\phi(E).$ So $X=$Bl$_p(Y)$ by \cite{kollar1991extremal}. If $Y$ is smooth, then $Y=\mathbb{P}^3$ by {\cite[Theorem 2]{occhetta2002euler}}, and $X=$ Bl$_p \mathbb{P}^3$ is Fano, a contradiction. So, $Y$ is singular. By \cite{kollar1991extremal}, $(E,N_{E/X})=(\mathbb{P}^2,\mathcal{O}(-2))$ or $(Q,\mathcal{O}(-1))$, where $Q$ is an irreducible quadric in $\mathbb{P}^3$.
     \end{proof}

     \textbf{Claim 3:} $\psi$ is small birational.
     \begin{proof}
         Suppose not.  By {\cite[Corollary 1.5]{jahnke2011threefolds}}, $-K_X$ is base point free. As $B$ has non-isomorphic endomorphism, we have $B\cong\mathbb{P}^1$ or an elliptic curve. Together with Claim 2, it shows $X$ is as in {\cite[No. 1, Table A.5]{jahnke2004threefolds}}. By Corollary \ref{bott}$(6)$, $X$ does not have Bott vanishing, a contradiction to Lemma \ref{totaro bott endo}.
     \end{proof}

     Now we get a contradiction exactly in the same way as in the end of Step 1 of the proof of Theorem \ref{A}.

\textbf{Step 2:} Now we complete the proof exactly in the same way as Step 2 of the proof of Theorem \ref{A}.

\begin{remark}
    A similar proof actually shows a statement more general than Theorem \ref{A}. For a variety $X$ over $\mathbb{C}$, call $X$ to be $F$-liftable, if there is a finitely generated subring $A$ of $\mathbb{C}$, a model $X_A$ of $X$ over $A$, such that for every algebraically closed field $k$ of positive characteristic and ring map $A\to k$, the $k$-scheme $X_A\times_A k$ is $F$-liftable as in \cite[Definition 2.2]{ksw}. We claim that if $X$ is an $F$-liftable weak Fano threefold of Picard rank $2$, then $X$ is toric. This is a special case of {\cite[Conjecture 1]{achinger2021global}}. This statement is a generalisation of Theorem \ref{A}, as by {\cite[Theorem 4.4.1]{achinger2021global}} toric images are F-liftable.

    The proof of this is very similar to the proof of Theorem \ref{B}. By {\cite[Theorem 3.2.4]{achinger2021global}}, an $F$-liftable variety has Bott vanishing. The analogue of Lemma \ref{ex1} is {\cite[Theorem 3.3.6]{achinger2021global}}. The analogue of Lemma \ref{pol flop} follows from the fact that for normal varieties over an algebraically closed field of positive characteristics, $F$-liftability can be checked in codimension $2$ (see \cite[Theorem 3.3.6(b)(iii)]{achinger2021global}). The analogue of Lemma \ref{del Pezzo fibre1} follows from {\cite[Corollary 5.3]{achinger2023global}} and {\cite[Theorem 2]{achinger2021global}}. Rest of the proof is exactly similar to the proof of Theorem \ref{B}.
\end{remark}
\section{Statements and Declarations}

No specific competing financial or non-financial interest to declare.
\printbibliography
\end{document}